\providecommand{\U}[1]{\protect\rule{.1in}{.1in}}
\newtheorem{theorem}{Theorem}
\theoremstyle{plain}
\newtheorem{corollary}{Corollary}
\newtheorem{lemma}{Lemma}
\newtheorem{proposition}{Proposition}
\newtheorem{remark}{Remark}
\numberwithin{equation}{section}
\begin{document}
\title{Function Spaces Related to the Dirichlet Space }
\author{N. Arcozzi}
\address{Dipartimento do Matematica\\
Universita di Bologna\\
40127 Bologna, ITALY}
\author{R. Rochberg}
\address{Department of Mathematics\\
Washington University\\
St. Louis, MO 63130, U.S.A}
\author{E. Sawyer}
\address{Department of Mathematics \& Statistics\\
McMaster University\\
Hamilton, Ontairo, L8S 4K1, CANADA}
\author{B. D. Wick}
\address{Department of Mathematics\\
LeConte College\\
1523 Greene Street\\
University of South Carolina\\
Columbia, SC 29208}
\thanks{N.A.'s work partially supported by the COFIN project Analisi Armonica, funded
by the Italian Minister for Research}
\thanks{R.R.'s work supported by the National Science Foundation under Grant No. 0070642}
\thanks{E.S.'s work supported by the National Science and Engineering Council of Canada.}
\thanks{B.W.'s work supported by the National Science Foundation under Grant No. 0752703}
\maketitle

\section{Introduction}

We present results about spaces holomorphic of functions associated to the
classical Dirichlet space. The spaces we consider have roles similar to the
roles of $H^{1}$ and $BMO$ in the Hardy space theory and we will emphasize
those analogies.

Definitions and background information are in the next section. The sections
after that contain results about the spaces and the functions in them. Most of
the results are new but some are not; however it seemed useful to present all
of them together. In the brief final section we mention some questions.

\section{Background}

We begin by defining the Dirichlet and Hardy spaces in ways that emphasize the
analogy between them. General background references for the Hardy space theory
include \cite{G}, \cite{D}, and \cite{N}; further information about the
Dirichlet space is in \cite{Ro} and \cite{W2}.

The Dirichlet space\ $\mathcal{D}$ is the Hilbert space of holomorphic
functions $f=\sum_{n=0}^{\infty}a_{n}z^{n}$ on the unit disk $\mathbb{D}$ for
which
\[
\int_{\mathbb{D}}\left\vert f^{\prime}\left(  z\right)  \right\vert
^{2}dA<\infty,\text{ equivalently, }\left\Vert f\right\Vert _{\mathcal{D}}%
^{2}=\sum_{0}^{\infty}\left(  n+1\right)  \left\vert a_{n}\right\vert
^{2}<\infty.
\]
The $\mathcal{D}$ inner product of $f$ and $g=\sum_{n=0}^{\infty}b_{n}z^{n}$
is given by%
\begin{align*}
\left\langle f,g\right\rangle  &  =\left\langle f,g\right\rangle
_{\mathcal{D}}\\
&  =\sum_{0}^{\infty}\left(  n+1\right)  a_{n}\bar{b}_{n}\\
&  \sim f(0)\overline{g(0)}+\int_{\mathbb{D}}f^{\prime}(z)\overline{g^{\prime
}(z)}dA.
\end{align*}
The Hardy space $H^{2}$ is the Hilbert space of holomorphic functions on the
unit disk for which
\[
\int_{\mathbb{D}}\left\vert f^{\prime}\left(  z\right)  \right\vert
^{2}(1-\left\vert z\right\vert ^{2})dA<\infty,\text{ equivalently, }\left\Vert
f\right\Vert _{H^{2}}^{2}=\sum_{0}^{\infty}\left\vert a_{n}\right\vert
^{2}<\infty.
\]
There are fundamental differences between the functional analytic and function
theoretic results for these spaces but there are also intriguing analogies,
some of which we will see below.

Associated to the Hankel bilinear forms which we will consider in Section
\ref{matrices} are "weakly factored" function spaces which we now define; see
\cite{A}, \cite{ARSW}, \cite{CFR}, and \cite{CV} for other instances of this
construction. Define the weakly factored space $\mathcal{D}\odot\mathcal{D}$
to be the completion of finite sums $h=\sum f_{j}g_{j}$ using the norm
\[
\left\Vert h\right\Vert _{\mathcal{D}\odot\mathcal{D}}=\inf\left\{
\sum\left\Vert f_{j}\right\Vert _{\mathcal{D}}\left\Vert g_{j}\right\Vert
_{\mathcal{D}}:h=\sum f_{j}g_{j}\right\}  .
\]
In particular if $f\in\mathcal{D}$ then $f^{2}\in\mathcal{D}\odot\mathcal{D}$
and
\begin{equation}
\left\Vert f^{2}\right\Vert _{\mathcal{D}\odot\mathcal{D}}\leq\left\Vert
f\right\Vert _{\mathcal{D}}^{2}. \label{square}%
\end{equation}
The spaces $H^{2}\odot H^{2}$ is defined analogously using the norm of $H^{2}
$. It is an immediate consequence of the inner-outer factorization for Hardy
space functions that $H^{2}\odot H^{2}=H^{1}$.

We also introduce a variant of $\mathcal{D}\odot\mathcal{D}$. Define the space
$\partial^{-1}\left(  \partial\mathcal{D}\odot\mathcal{D}\right)  $ to be the
completion of the space of functions $h$ such that $h^{\prime}$ can be written
as a finite sum, $h^{\prime}=\sum f_{j}^{\prime}g_{j}$ (and thus
$h=\partial^{-1}\sum\left(  \partial f_{i}\right)  g_{i})$, with the norm
\[
\left\Vert h\right\Vert _{\partial^{-1}\left(  \partial\mathcal{D}%
\odot\mathcal{D}\right)  }=\inf\left\{  \sum\left\Vert f_{j}\right\Vert
_{\mathcal{D}}\left\Vert g_{j}\right\Vert _{\mathcal{D}}:h^{\prime}=\sum
f_{j}^{\prime}g_{j}\right\}  .
\]

We say that a positive measure $\mu$ supported on the closed disk is a
Carleson measure for $\mathcal{D}$, $\mu\in CM(\mathcal{D)}$, if there is a
$C>0$ so that for all $f\in$ $\mathcal{D}$
\[
\int_{\mathbb{D}}\left\vert f\right\vert ^{2}d\mu\leq C^{2}\left\Vert
f\right\Vert _{\mathcal{D}}^{2}.
\]
The smallest such $C$ is the Carleson measure norm of $\mu,$ $\left\Vert
\mu\right\Vert _{CM(\mathcal{D)}}$. The set of measures $CM(H^{2}\mathcal{)} $
is defined and normed analogously. The measures in $CM(\mathcal{D)}$ were
first characterized by Stegenga \cite{S} using capacity theoretic conditions.
Measure theoretic characterizations can also be given, for instance in
\cite{ARS2}.

Recall that among the equivalent definitions of $BMO$ is that $f$ is in $BMO$
exactly if%
\begin{equation}
\left\Vert f\right\Vert _{BMO}^{2}\sim\left\vert f(0)\right\vert
^{2}+\left\Vert \left\vert f^{\prime}\right\vert ^{2}(1-\left\vert
z\right\vert ^{2})dA\right\Vert _{CM(H^{2}\mathcal{)}}<\infty. \label{bmo}%
\end{equation}
We next introduce the space $\mathcal{X}$ which plays a role in the Dirichlet
space theory analogous to the role of $BMO$ in the Hardy space theory. We say
$f\in\mathcal{X}$ if
\[
\left\Vert f\right\Vert _{\mathcal{X}}^{2}=\left\vert f(0)\right\vert
^{2}+\left\Vert \left\vert f^{\prime}\right\vert ^{2}dA\right\Vert
_{CM(\mathcal{D)}}<\infty.
\]
We denote the closure in $\mathcal{X}$ of the space of polynomials by
$\mathcal{X}_{0}.$

Finally we define the multiplier spaces. For a space of holomorphic functions
$X$ the multiplier space, $\mathcal{M}(X),$ is the space of functions $f$ for
which multiplication by $f$ is a bounded map of $X$ into itself. The space is
normed by the norm of the multiplication operator. It is a result going back
to Setgenga \cite{S} that $\mathcal{M}(\mathcal{D})=H^{\infty}\cap\mathcal{X}%
$; here $H^{\infty}$ is the space of bounded holomorphic functions on the
disk. The analogous result $\mathcal{M}(H^{2})=H^{\infty}\cap BMO$ also holds
but it is never presented that way because $H^{\infty}\subset BMO.$

Here is a summary of relations between the spaces. The duality pairings are
with respect to the Dirichlet pairing $\left\langle \cdot,\cdot\right\rangle
_{\mathcal{D}}.$

\begin{theorem}
\label{duality} We have

\begin{enumerate}
\item $\mathcal{X}_{0}^{\ast}=\mathcal{D}\odot\mathcal{D},$

\item $\left(  \mathcal{D}\odot\mathcal{D}\right)  ^{\ast}=\mathcal{X},$

\item $\mathcal{M}(\mathcal{D})=H^{\infty}\cap\mathcal{X},$

\item $\mathcal{D}\odot\mathcal{D=}$ $\partial^{-1}\left(  \partial
\mathcal{D}\odot\mathcal{D}\right)  .$
\end{enumerate}

\begin{proof}
[Proof discussion]As we mentioned (3) is proved in \cite{S}.

A result essentially equivalent to $\left(  \partial^{-1}\left(
\partial\mathcal{D}\odot\mathcal{D}\right)  \right)  ^{\ast}=\mathcal{X}$ was
proved by Coifman-Muri \cite{CM} using real variable techniques and in more
function theoretic contexts by Tolokonnikov \cite{To} and by Rochberg-Wu
\cite{RW}. An interesting alternative approach to the result is given by Treil
and Volberg in \cite{TV}.

In \cite{W1} it is shown that $\mathcal{X}_{0}^{\ast}=\partial^{-1}\left(
\partial\mathcal{D}\odot\mathcal{D}\right)  .$ Item (2) is proved in
\cite{ARSW} and when that is combined with the other results we obtain (1) and (4).
\end{proof}
\end{theorem}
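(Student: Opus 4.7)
\emph{Proof proposal.} Items (3) and (2) are imported from \cite{S} and \cite{ARSW}; from \cite{CM}, \cite{To}, \cite{RW} I take $(\partial^{-1}(\partial\mathcal{D}\odot\mathcal{D}))^{\ast}=\mathcal{X}$; and from \cite{W1} I take $\mathcal{X}_{0}^{\ast}=\partial^{-1}(\partial\mathcal{D}\odot\mathcal{D})$, all with the dualities implemented by the Dirichlet pairing. The remaining task is to prove (4), since (1) will then follow at once by combining (4) with the identification from \cite{W1}.

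For (4), set $Y=\mathcal{D}\odot\mathcal{D}$ and $Z=\partial^{-1}(\partial\mathcal{D}\odot\mathcal{D})$. The inclusion $Y\hookrightarrow Z$ is elementary: given $h=\sum_{j}f_{j}g_{j}$ nearly realizing $\|h\|_{Y}$, the product rule yields $h^{\prime}=\sum_{j}(f_{j}^{\prime}g_{j}+f_{j}g_{j}^{\prime})$, a decomposition of $h^{\prime}$ of the type required by the definition of $Z$, with $\|h\|_{Z}\le 2\|h\|_{Y}$. For the reverse inclusion I would \emph{not} attempt a direct integration-by-parts construction---that would require swapping $\sum f_{j}^{\prime}g_{j}$ with $\sum f_{j}g_{j}^{\prime}$, an operation with no obvious norm bound---but instead invoke duality. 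Both $Y^{\ast}=\mathcal{X}$ and $Z^{\ast}=\mathcal{X}$ are realized by the Dirichlet pairing, and polynomials are dense in each: in $Y$ because polynomials approximate any element of $\mathcal{D}$ and (\ref{square}) provides the needed norm control on products, and in $Z$ by an analogous approximation applied on the level of derivatives. Hence for every polynomial $p$,
\[
\|p\|_{Y}\sim\sup\{|\langle p,f\rangle_{\mathcal{D}}|:\|f\|_{\mathcal{X}}\le 1\}\sim\|p\|_{Z},
\]
so the identity on polynomials extends to a Banach space isomorphism $Y\cong Z$, which is (4).

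Given (4), item (1) is immediate: $\mathcal{X}_{0}^{\ast}=\partial^{-1}(\partial\mathcal{D}\odot\mathcal{D})=\mathcal{D}\odot\mathcal{D}$. The main obstacle I anticipate is verifying that the dualities imported from \cite{ARSW} and from \cite{CM}, \cite{To}, \cite{RW} are literally implemented by the same Dirichlet pairing, up to a uniform constant, on the dense subspace of polynomials; this requires tracing through the identifications in each of those sources. Once that compatibility is confirmed, the argument above is purely formal, and the asymmetry between the ``multiplicative'' presentation of $Y$ and the ``derivative-multiplicative'' presentation of $Z$ is dissolved entirely through their common dual.
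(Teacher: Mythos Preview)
Your proposal is correct and follows essentially the same route as the paper: the paper's ``proof discussion'' simply imports the same four external results you cite and then asserts that combining $(\mathcal{D}\odot\mathcal{D})^{\ast}=\mathcal{X}$ with $(\partial^{-1}(\partial\mathcal{D}\odot\mathcal{D}))^{\ast}=\mathcal{X}$ and $\mathcal{X}_{0}^{\ast}=\partial^{-1}(\partial\mathcal{D}\odot\mathcal{D})$ yields (1) and (4), without spelling out the mechanism. You have made that mechanism explicit---the common-dual argument on the dense subspace of polynomials---and correctly flagged the one genuine verification needed, namely that all the cited dualities are realized by the same Dirichlet pairing.
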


Statement (2) of the theorem is the analog of Nehari's characterization of
bounded Hankel forms on the Hardy space, recast using the identification
$H^{2}\odot H^{2}=H^{1}$ and Fefferman's duality theorem. Item (1) is the
analog of Hartman's characterization of compact Hankel forms. Statement (4) is
similar in spirit to the weak factorization result for Hardy spaces given by
Aleksandrov and Peller in \cite{AP} where they study Foguel-Hankel operators
on the Hardy space.

Given the previous theorem it is easy to check the inclusions%

\begin{equation}
\mathcal{M}(\mathcal{D})\subset\mathcal{X\subset D\subset D}\odot\mathcal{D}
\label{dirinc}%
\end{equation}
which we will use later.

\section{Size}

In this section we obtain norm and pointwise estimates for the functions in
$\mathcal{D}\odot\mathcal{D}$ and $\mathcal{X}$. We begin by recalling the
basic results for the Dirichlet space.

For $\zeta\in\mathbb{D}$ define functions $\delta$ and $L$ by%
\begin{align*}
\delta(\zeta)  &  =1-\left\vert \zeta\right\vert ^{2}\\
L(\zeta)  &  =1+\log\delta(\zeta)^{-1}.
\end{align*}

Recall \cite[Section 1.1]{G} that the pseudohyperbolic metric, $\rho,$ on the
disk is given by
\[
\rho(\zeta_{1},\zeta_{2})=\left\vert \frac{\zeta_{1}-\zeta_{2}}{1-\overline
{\zeta_{1}}\zeta_{2}}\right\vert
\]
and satisfies $0\leq\rho<1.$ The hyperbolic distance, $\beta,$ is given by
\begin{equation}
\beta(\zeta_{1},\zeta_{2})=\log\left(  \frac{1+\rho(\zeta_{1},\zeta_{2}%
)}{1-\rho(\zeta_{1},\zeta_{2})}\right)  . \label{related}%
\end{equation}
On subsets of $\mathbb{D}\times\mathbb{D}$ on which $\rho\leq c<1$ we have
$\rho\sim\beta$. However for highly separated points we have estimates such as
$\beta(0,\zeta)\sim L(\zeta)\ $as $\left\vert \zeta\right\vert \rightarrow1.$

For $\zeta\in\mathbb{D}$ the reproducing kernel (for $\mathcal{D)}$, which is
characterized by the property that for $f\in\mathcal{D}$, $\zeta\in\mathbb{D}$
we have $f(\zeta)=\left\langle f,k_{\zeta}\right\rangle $, is given by the
formula
\[
k_{\zeta}(z)=\frac{1}{\bar{\zeta}z}\log\frac{1}{\left(  1-\bar{\zeta}z\right)
}.
\]
One has that for $\zeta,$ $\zeta_{1},$ $\zeta_{2}\in\mathbb{D}$
\begin{align}
\left\Vert k_{\zeta}\right\Vert _{\mathcal{D}}  &  =L(\zeta)^{1/2},\text{
}\label{x}\\
\left\Vert \bar{\partial}_{\zeta}k_{\zeta}\right\Vert _{\mathcal{D}}  &
\sim\delta(\zeta)^{-1},\label{y}\\
\left\Vert k_{\zeta_{1}}-k_{\zeta_{2}}\right\Vert _{\mathcal{D}}  &  \sim
\frac{\beta(\zeta_{1},\zeta_{2})}{1+\beta(\zeta_{1},\zeta_{2})^{1/2}}%
\lesssim\beta(\zeta_{1},\zeta_{2})^{1/2}. \label{z}%
\end{align}
The first two are straightforward. For the third we introduce the space
$\mathcal{\tilde{D}}$ of functions in $\mathcal{D}$ which vanish at the origin
and which is normed by $\left\Vert \sum_{1}^{\infty}a_{n}z^{n}\right\Vert
_{\mathcal{\tilde{D}}}^{2}=\sum n\left\vert a_{n}\right\vert ^{2}.$ The
reproducing kernels for $\mathcal{\tilde{D}}$ are the functions $\tilde
{k}_{\zeta}(z)=-\log\left(  1-\bar{\zeta}z\right)  . $ We have
\begin{align}
\left\Vert k_{\zeta_{1}}-k_{\zeta_{2}}\right\Vert _{\mathcal{D}}^{2}  &
=\sup\left\{  \left\vert \left\langle f,k_{\zeta_{1}}-k_{\zeta_{2}%
}\right\rangle \right\vert :f\in\mathcal{D},\left\Vert f\right\Vert
_{\mathcal{D}}=1\right\}  ^{2}\nonumber\\
&  =\sup\left\{  \left\vert \left\langle f,k_{\zeta_{1}}-k_{\zeta_{2}%
}\right\rangle \right\vert :f\in\mathcal{D},f(0)=0,\left\Vert f\right\Vert
_{\mathcal{D}}=1\right\}  ^{2}\nonumber\\
&  \sim\sup\left\{  \left\vert \left\langle f,\tilde{k}_{\zeta_{1}}-\tilde
{k}_{\zeta_{2}}\right\rangle _{\mathcal{\tilde{D}}}\right\vert :f\in
\mathcal{\tilde{D}},\left\Vert f\right\Vert _{\mathcal{\tilde{D}}}=1\right\}
^{2}\nonumber\\
&  =\left\Vert \tilde{k}_{\zeta_{1}}-\tilde{k}_{\zeta_{2}}^{2}\right\Vert
_{\mathcal{\tilde{D}}}=\left\langle \tilde{k}_{\zeta_{1}}-\tilde{k}_{\zeta
_{2}},\tilde{k}_{\zeta_{1}}-\tilde{k}_{\zeta_{2}}\right\rangle
_{\mathcal{\tilde{D}}}\nonumber\\
&  =-\log\left(  1-\left\vert \zeta_{1}\right\vert ^{2}\right)  -\log\left(
1-\left\vert \zeta_{2}\right\vert ^{2}\right)  +2\log\left\vert 1-\overline
{\zeta_{1}}\zeta_{2}\right\vert \nonumber\\
&  =-\log\frac{\left(  1-\left\vert \zeta_{1}\right\vert ^{2}\right)  \left(
1-\left\vert \zeta_{2}\right\vert ^{2}\right)  }{\left\vert 1-\overline
{\zeta_{1}}\zeta_{2}\right\vert ^{2}}=-\log\left(  1-\left\vert \frac
{\zeta_{1}-\zeta_{2}}{1-\overline{\zeta_{1}}\zeta_{2}}\right\vert ^{2}\right)
\label{use}\\
&  =\log\left(  \frac{1+\rho(\zeta_{1},\zeta_{2})}{1-\rho(\zeta_{1},\zeta
_{2})}\right)  -2\log\left(  1+\rho(\zeta_{1},\zeta_{2})\right) \label{smll}\\
&  =\beta(\zeta_{1},\zeta_{2})-2\log\left(  1+\rho(\zeta_{1},\zeta
_{2})\right)  . \label{large}%
\end{align}
The passage from the first line to the second uses the fact that replacing
$f(z)$ by $f(z)-f(0)$ gives a better competitor for calculating the first
supremum. The passage to the third line uses the fact that the identity map is
a bounded invertible map of $\left\{  f\in\mathcal{D},f(0)=0\right\}  $ to
$\mathcal{\tilde{D}}$. The equality in (\ref{use}) is a computational
identity, see \cite[Section 1.1]{G}, and the final line is obtained using
(\ref{related}).

If $\rho\leq c<1$ then we have $\rho\sim\beta$ and the desired estimate can be
seen from from (\ref{smll}). For $\rho\sim1$ we estimate using the last line.

From these estimates follow pointwise estimates for $f\in\mathcal{D}$;
\begin{align}
\sup\left\{  \left\vert f(\zeta)\right\vert :\left\Vert f\right\Vert
_{\mathcal{D}}\leq1\right\}   &  \sim L(\zeta)^{1/2},\label{dvalue}\\
\sup\left\{  \left\vert f^{\prime}(\zeta)\right\vert :\left\Vert f\right\Vert
_{\mathcal{D}}\leq1\right\}   &  \sim\delta(z)^{-1},\label{dderiv}\\
\sup\left\{  \left\vert f(\zeta_{1})-f(\zeta_{2})\right\vert :\left\Vert
f\right\Vert _{\mathcal{D}}\leq1\right\}   &  \lesssim\beta(\zeta_{1}%
,\zeta_{2})^{1/2}. \label{ddiff}%
\end{align}
In such estimates will refer to the fact that the left side is dominated by
the right as the \textit{upper estimate,} the other as the \textit{lower
estimate. }

We now give estimates for $k_{\zeta}$, $\bar{\partial}_{\zeta}k_{\zeta}$ and
related functions in $\mathcal{X}\ $and $\mathcal{D}\odot\mathcal{D}.$ We omit
rewriting them in the forms such as (\ref{dvalue}), (\ref{dderiv}), and
(\ref{ddiff}).

\begin{theorem}
For $\zeta,\zeta_{1},\zeta_{2}\in\mathbb{D}$ we have

\begin{enumerate}
\item (norm estimates in $\mathcal{X}$)
\begin{align}
\left\Vert k_{\zeta}\right\Vert _{\mathcal{X}}  &  \sim L(\zeta),\label{xval}%
\\
\left\Vert \bar{\partial}_{\zeta}k_{\zeta}\right\Vert _{\mathcal{X}}  &  \sim
L(\zeta)^{1/2}\delta(\zeta)^{-1},\label{xderiv}\\
\left\Vert k_{\zeta_{1}}-k_{\zeta_{2}}\right\Vert _{\mathcal{X}}  &
\lesssim\beta(\zeta_{1},\zeta_{2})^{1/2}\left(  L(\zeta_{1})^{1/2}+L(\zeta
_{2})^{1/2}\right)  , \label{xdiff}%
\end{align}

\item (norm estimates in $\mathcal{D}\odot\mathcal{D)}$%
\begin{align}
\left\Vert k_{\zeta}\right\Vert _{\mathcal{D}\odot\mathcal{D}}  &  \sim
\log(1+L(\zeta)),\text{ }\label{tval}\\
\left\Vert k_{\zeta}^{2}\right\Vert _{\mathcal{D}\odot\mathcal{D}}  &  \sim
L(\zeta),\label{tsquare}\\
\left\Vert \bar{\partial}_{\zeta}k_{\zeta}\right\Vert _{\mathcal{D}%
\odot\mathcal{D}}  &  \sim L(\zeta)^{-1/2}\delta(\zeta)^{-1}. \label{tderiv}%
\end{align}
For $\theta>0$ we have
\begin{equation}
\left\Vert \left(  \frac{1-\left\vert \zeta\right\vert ^{2}}{1-\bar{\zeta}%
z}\right)  ^{\theta}\right\Vert _{_{\mathcal{D}\odot\mathcal{D}}}\sim
L(\zeta)^{-1/2}. \label{bump}%
\end{equation}
The implied constant here may depend on $\theta.$

\item (a relatively large function in $\mathcal{X)}$

The function $H(z)$ defined by
\[
H(z)=\int_{1/2}^{1}\log\left(  \frac{1}{1-zx}\right)  \frac{1}{\left(
1-x\right)  \left[  \log\left(  1-x\right)  \right]  ^{2}}dx
\]
satisfies both%
\begin{align}
\left\Vert H\right\Vert _{\mathcal{X}}  &  <\infty\text{, and}\label{H}\\
H(\zeta)  &  =\log(\left\vert \log(1-\zeta)\right\vert )+O(1)\text{ for }%
\zeta\in(.9,1).\nonumber
\end{align}

\item (norms of monomials)

For $n=1,2,...,$%
\begin{align}
\left\Vert z^{n}\right\Vert _{\mathcal{D}\odot\mathcal{D}}  &  \sim\sqrt
{n}\text{,}\label{ten}\\
\left\Vert z^{n}\right\Vert _{\mathcal{X}}  &  \sim\sqrt{n}. \label{X}%
\end{align}

\end{enumerate}
\end{theorem}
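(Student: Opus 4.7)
The plan is to use Theorem~\ref{duality} as the engine throughout: $\|F\|_{\mathcal{X}}$ becomes $\sup_h|\langle F,h\rangle_{\mathcal{D}}|/\|h\|_{\mathcal{D}\odot\mathcal{D}}$, and dually for $\mathcal{D}\odot\mathcal{D}$-norms. For part~(1), I would write $h\in\mathcal{D}\odot\mathcal{D}$ as $h=fg$ and apply Leibniz together with \eqref{dvalue}--\eqref{ddiff} to get
\[
|h(\zeta)|\le L(\zeta)\|f\|_{\mathcal{D}}\|g\|_{\mathcal{D}},\qquad |h'(\zeta)|\lesssim \frac{L(\zeta)^{1/2}}{\delta(\zeta)}\|f\|_{\mathcal{D}}\|g\|_{\mathcal{D}},
\]
with an analogous $\beta^{1/2}(L(\zeta_1)^{1/2}+L(\zeta_2)^{1/2})$ bound for the difference. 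Taking infima over factorizations yields the upper halves of \eqref{xval}--\eqref{xdiff}. Matching lower bounds come from testing the dual pairing against $h=k_\zeta\cdot k_\zeta$ for \eqref{xval} and $h=k_\zeta\cdot\bar\partial_\zeta k_\zeta$ for \eqref{xderiv}, using the reproducing identities $\langle k_\zeta,h\rangle_{\mathcal{D}}=h(\zeta)$ and $\langle\bar\partial_\zeta k_\zeta,h\rangle_{\mathcal{D}}=\overline{h'(\zeta)}$ together with the explicit evaluations (e.g.\ $h'(\zeta)\sim L(\zeta)/\delta(\zeta)^2$ in the second case).

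For part~(2), \eqref{tsquare} has upper bound immediately from \eqref{square} and lower bound by pairing against $\phi=k_\zeta\in\mathcal{X}$ whose norm is known from~(1). To handle \eqref{tval} and \eqref{tderiv} I dualize and prove the two pointwise bounds
\[
|\phi(\zeta)|\lesssim \log(1+L(\zeta))\|\phi\|_{\mathcal{X}},\qquad |\phi'(\zeta)|\lesssim L(\zeta)^{-1/2}\delta(\zeta)^{-1}\|\phi\|_{\mathcal{X}}
\]
for $\phi\in\mathcal{X}$. The derivative bound follows from sub-mean-value on $B(\zeta,c\delta(\zeta))$ combined with the Carleson inequality tested against $k_\zeta/L(\zeta)^{1/2}$, which localizes $\int_B|\phi'|^2\,dA\lesssim L(\zeta)^{-1}\|\phi\|_{\mathcal{X}}^2$ because $|k_\zeta(z)|^2/L(\zeta)\gtrsim L(\zeta)$ on $B$. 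The value bound then follows by integrating $\phi'$ along a hyperbolic geodesic from $0$ to $\zeta$. The matching lower bounds use $\phi=\bar\partial_\zeta k_\zeta$ for \eqref{tderiv} and $\phi=H$ from part~(3) for \eqref{tval}. For \eqref{bump}, pairing the bump against $\phi=\bar\partial_\zeta k_\zeta$ gives the lower bound directly since the pairing equals the derivative at $\zeta$, which is $\sim\delta(\zeta)^{-1}$, while $\|\bar\partial_\zeta k_\zeta\|_{\mathcal{X}}\sim L(\zeta)^{1/2}/\delta(\zeta)$; the upper bound follows from Cauchy--Schwarz on the integral representation of the pairing combined with the Carleson condition.

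For part~(3), the pointwise asymptotic on $H$ is a Laplace computation: the substitution $u=-\log(1-x)$ converts the density to $du/u^2$ and the log factor to $u$, and splitting the $x$-integral at $1-x\sim|1-\zeta|$ yields $\int du/u\sim\log\log\tfrac{1}{|1-\zeta|}$ from the inner range and $O(1)$ from the outer. For $\|H\|_{\mathcal{X}}<\infty$ I would recognize that $H(z)=\int_{1/2}^1\tilde k_x(z)\,d\mu(x)$ is a superposition of $\tilde{\mathcal{D}}$-kernels with weight $d\mu(x)=dx/((1-x)[\log(1-x)]^2)$; a Minkowski-type averaging argument then reduces Carlesonicity of $|H'|^2\,dA$ for $\mathcal{D}$ to showing that $\mu$ itself is Carleson for $\mathcal{D}$ on the radius. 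Since $\mu([1-s,1])\sim 1/|\log s|$ sits exactly at the logarithmic capacity borderline, Stegenga's characterization \cite{S} (or the tree-based one in \cite{ARS2}) applies. This Carleson verification is where I expect the main technical difficulty.

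For part~(4), the upper bound $\|z^n\|_{\mathcal{D}\odot\mathcal{D}}\le\|z^n\cdot 1\|_{\mathcal{D}\odot\mathcal{D}}\le\|z^n\|_{\mathcal{D}}\sim\sqrt n$ is immediate. A direct expansion,
\[
\int|f|^2\cdot n^2|z|^{2n-2}\,dA=\pi n^2\sum_m\frac{|a_m|^2}{m+n}\le\pi n\sum_m(m+1)|a_m|^2=\pi n\|f\|_{\mathcal{D}}^2,
\]
sharp for $f\equiv 1$, yields $\|z^n\|_{\mathcal{X}}\sim\sqrt n$. The matching lower bound $\|z^n\|_{\mathcal{D}\odot\mathcal{D}}\gtrsim\sqrt n$ then follows by duality: $n+1=\langle z^n,z^n\rangle_{\mathcal{D}}\lesssim\|z^n\|_{\mathcal{D}\odot\mathcal{D}}\|z^n\|_{\mathcal{X}}\sim\|z^n\|_{\mathcal{D}\odot\mathcal{D}}\sqrt n$.
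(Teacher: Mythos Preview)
Your overall architecture matches the paper's closely, and several of your shortcuts are genuine improvements: your direct lower bound for \eqref{xderiv} by testing against $h=k_\zeta\cdot\bar\partial_\zeta k_\zeta$, and your sub-mean-value argument for the pointwise derivative bound $|\phi'(\zeta)|\lesssim L(\zeta)^{-1/2}\delta(\zeta)^{-1}\|\phi\|_{\mathcal X}$, are cleaner than the paper's route (which first proves the bump estimate \eqref{bump} as a technical lemma and then deduces \eqref{xderiv} and \eqref{tderiv} from it). Parts (3) and (4) are essentially what the paper does.

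There is, however, a real gap in your argument for the upper estimate in \eqref{tval}. You propose to integrate the sharp derivative bound $|\phi'(w)|\lesssim L(w)^{-1/2}\delta(w)^{-1}\|\phi\|_{\mathcal X}$ along a geodesic from $0$ to $\zeta$. Carrying this out (say along the radius, with the substitution $u=L(r)$, $du\sim dr/(1-r)$) gives
\[
|\phi(\zeta)-\phi(0)|\lesssim \|\phi\|_{\mathcal X}\int_{1}^{L(\zeta)}u^{-1/2}\,du\sim L(\zeta)^{1/2}\|\phi\|_{\mathcal X},
\]
not $\log(1+L(\zeta))\|\phi\|_{\mathcal X}$. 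The pointwise derivative bound is sharp (it is attained by $\bar\partial_\zeta k_\zeta$), so no refinement of this integration will recover the logarithm. The paper instead factors $k_\zeta=(k_\zeta^{1/2})^2$ and estimates $\|k_\zeta^{1/2}\|_{\mathcal D}^2$ directly, obtaining a sum $\sum_{n\lesssim|\log\delta|}(1+|\log\delta|-n)^{-1}\sim\log|\log\delta|$; an alternative (given in the paper as a Remark) is to show $\|f^k\|_{\mathcal D}\le k!\,\|f\|_{\mathcal X}^k$, sum to get $e^{\varepsilon f}\in\mathcal D$, and apply \eqref{dvalue}.

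A secondary concern: your treatment of the upper bound in \eqref{bump} is underspecified. ``Cauchy--Schwarz on the pairing combined with the Carleson condition'' amounts to choosing a weight $f$ so that $(\int|G_\theta'|^2/|f|^2)^{1/2}\|f\|_{\mathcal D}\lesssim L(\zeta)^{-1/2}$; the obvious choices (e.g.\ $f=k_\zeta$ or $f=G_{\theta/2}$) do not give this, and the paper's proof spends its main technical effort here, factoring $G_\theta=(G_\theta\Lambda^{-3/4})\cdot\Lambda^{3/4}$ with $\Lambda=3i-\log(1-\bar\zeta z)$ and carefully estimating each Dirichlet norm via a dyadic decomposition. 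Since you obtain \eqref{tderiv} and \eqref{xderiv} by other means, you need \eqref{bump} only as a statement in its own right, but you still have to supply this factoring or an equivalent.
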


\begin{proof}
We first note that the estimates (\ref{x}), (\ref{y}) and (\ref{z}) imply the
upper estimates in (\ref{xval}), (\ref{xderiv}), and (\ref{xdiff}). More
precisely if one starts with a representation $h=\sum f_{j}g_{j}$ of
$h\in\mathcal{D}\odot\mathcal{D}$ which is almost optimal and then applies the
estimate (\ref{dvalue}) to all of the $f^{\prime}s$ and $g^{\prime}s$ we find
$\left\vert h(\zeta)\right\vert \lesssim L(\zeta)\left\Vert h\right\Vert
_{\mathcal{D}\odot\mathcal{D}}.$ Taking note of the fact that $\left(
\mathcal{D}\odot\mathcal{D}\right)  ^{\ast}=\mathcal{X}$ this gives the upper
estimate in (\ref{xval}). If we start with the same representation of $h$,
compute $h^{\prime}$ and apply the estimates (\ref{dvalue}) and (\ref{dderiv})
we conclude
\[
\left\vert h^{\prime}(\zeta)\right\vert \lesssim L(\zeta)^{1/2}\delta
(x)^{-1}\left\Vert h\right\Vert _{\mathcal{D}\odot\mathcal{D}}.
\]
This gives the upper estimate\ for a functional in the dual space, this time
the upper estimate in (\ref{xderiv}). Similarly, we obtain (\ref{xdiff}) by
showing that for a unit vector $h\in\mathcal{D}\odot\mathcal{D}$ we have a
good estimate for $\left\vert h(\zeta_{1})-h(\zeta_{2})\right\vert $. Using
the identity
\begin{equation}
fg=\frac{1}{4}((f+g)^{2}-(f-g)^{2}) \label{pol}%
\end{equation}
we see that we can write $h=\sum h_{j}^{2}$ with $\sum\left\Vert
h_{j}\right\Vert _{\mathcal{D}}^{2}=O(1).$ For each term we have
\[
\left\vert h_{j}^{2}(\zeta_{1})-h_{j}^{2}(\zeta_{2})\right\vert \leq\left\vert
h_{j}(\zeta_{1})-h_{j}(\zeta_{2})\right\vert \left\{  \left\vert h_{j}%
(\zeta_{1})|+|h_{j}(\zeta_{2})\right\vert \right\}  .
\]
We apply (\ref{z}) to the first factor on the right and (\ref{x}) to the terms
inside the braces and obtain%
\[
\left\vert h_{j}^{2}(\zeta_{1})-h_{j}^{2}(\zeta_{2})\right\vert \lesssim
\beta(\zeta_{1},\zeta_{2})^{1/2}\left(  L(\zeta_{1}\right)  ^{1/2}+\left(
L(\zeta_{2}\right)  ^{1/2})\left\Vert h_{j}\right\Vert _{\mathcal{D}}^{2}.
\]
Summing with respect to $j$ gives the desired estimate for $h.$

We now consider the corresponding lower estimates. Note that $k_{\zeta}%
(\zeta)\sim L(\zeta).$ Using the upper estimate in (\ref{xval}) and duality we
have
\[
L(\zeta)^{2}\sim\left\vert k_{\zeta}(\zeta)^{2}\right\vert =\left\vert
\left\langle k_{\zeta}^{2},k_{\zeta}\right\rangle \right\vert \leq\left\Vert
k_{\zeta}^{2}\right\Vert _{\mathcal{D}\odot\mathcal{D}}\left\Vert k_{\zeta
}\right\Vert _{\mathcal{X}}\lesssim\left\Vert k_{\zeta}^{2}\right\Vert
_{\mathcal{D}\odot\mathcal{D}}L(\zeta).
\]
Comparing the right side and the left we obtain the lower estimate in
(\ref{tsquare}). With that estimate in hand we compare the left side with the
fourth term and obtain the lower estimate in (\ref{xval}).

We now prove the upper estimate for (\ref{bump}) as a separate lemma.

\begin{lemma}
Pick and fix $\theta>0$ and $\zeta\in\mathcal{D}$. Define%
\[
G_{\zeta,\theta}(z)=G_{\theta}(z)=\left(  \frac{1-\left\vert \zeta\right\vert
^{2}}{1-\bar{\zeta}z}\right)  ^{\theta}.
\]
We have
\[
\left\Vert G_{\zeta,\theta}\right\Vert _{_{\mathcal{D}\odot\mathcal{D}}%
}\lesssim L(\zeta)^{-1/2}.
\]

\begin{proof}
[Proof of Lemma]We will use the auxiliary function $\Lambda$,
\[
\Lambda(z)=3i-\log\left(  1-\bar{\zeta}z\right)  .
\]
The constant $3i$ insures $\operatorname{Im}\left(  \Lambda\right)  >1$ and in
particular we can work freely with powers of $\Lambda.$ Set $G_{1}=G_{\theta
}\Lambda^{-3/4}\ $and $G_{2}=\Lambda^{3/4};$ thus $G_{\theta}=G_{1}G_{2}.$ We
will obtain the upper estimate in (\ref{bump}) using
\begin{equation}
\left\Vert G_{\theta}\right\Vert _{_{\mathcal{D}\odot\mathcal{D}}}^{2}%
\leq\left\Vert G_{1}\right\Vert _{_{\mathcal{D}}}^{2}\left\Vert G_{2}%
\right\Vert _{_{\mathcal{D}}}^{2}. \label{product}%
\end{equation}
Without loss of generality we assume $\zeta$ is real and positive. We only
need to consider the case of $\zeta$ close to $1$. Set $\zeta=1-\gamma$ and
$z=1+w$, hence $\gamma\sim\delta=\delta(\zeta)$ and $1-\bar{\zeta}%
z=\gamma-w+\gamma w.$ We compute%
\[
G_{1}^{\prime}(z)=\theta\bar{\zeta}\frac{\delta^{\theta}}{\left(  1-\bar
{\zeta}z\right)  ^{\theta+1}\Lambda^{3/4}}+\frac{3\bar{\zeta}}{4}\frac
{\delta^{\theta}}{\left(  1-\bar{\zeta}z\right)  ^{\theta+1}\Lambda^{7/4}}%
\]
and%
\[
G_{2}^{\prime}(z)=\frac{3\bar{\zeta}}{4}\frac{1}{\left(  1-\bar{\zeta
}z\right)  \Lambda^{1/4}}.
\]

We break $\mathbb{D}$ into regions beginning with $R_{0}=\mathbb{D}%
\cap\left\{  w:\left\vert w\right\vert \leq\gamma\right\}  .$ In $R_{0}$ we
have $\operatorname{Re}w\leq0$ and hence $\left\vert \gamma-w+\gamma
w\right\vert \geq\operatorname{Re}\left(  \gamma-w+\gamma w\right)  \geq
\gamma+O(\gamma^{2})\geq c\gamma.$ Hence for $z\in R_{0}$
\begin{align}
\gamma &  \lesssim\left\vert 1-\bar{\zeta}z\right\vert \lesssim1,\label{r0}\\
1+\left\vert \log\gamma\right\vert  &  \lesssim\left\vert \Lambda\right\vert
.\nonumber
\end{align}
For $n=1,2,...$ we set $R_{n}=\mathbb{D}\cap\left\{  w:2^{n-1}\gamma
\leq\left\vert w\right\vert \leq2^{n}\gamma\right\}  $ and denote by $n_{0}$
the largest $n$ for which $R_{n}\neq\phi;$ thus $n_{0}\sim\left\vert
\log\delta\right\vert .$ For $z\in R_{n},$ $1\leq n\leq n_{0}$, we have
\begin{align}
\left\vert 1-\bar{\zeta}z\right\vert  &  \sim2^{n}\gamma,\label{rn}\\
1+\left\vert \log2^{n}\gamma\right\vert  &  \lesssim\left\vert \Lambda
\right\vert .\nonumber
\end{align}
Taking the second of those estimates into account we have
\begin{align*}
\left\Vert G_{1}\right\Vert _{_{\mathcal{D}}}^{2}  &  \lesssim\int
_{\mathbb{D}}\left\vert \frac{\delta^{\theta}}{\left(  1-\bar{\zeta}z\right)
^{\theta+1}\Lambda^{3/4}}\right\vert ^{2}+\left\vert \frac{\delta^{\theta}%
}{\left(  1-\bar{\zeta}z\right)  ^{\theta+1}\Lambda^{7/4}}\right\vert ^{2}\\
&  \lesssim\delta^{2\theta}\int_{\mathbb{D}}\left\vert \frac{1}{\left(
1-\bar{\zeta}z\right)  ^{\theta+1}\Lambda^{3/4}}\right\vert ^{2}\\
&  \lesssim\delta^{2\theta}\sum_{n=0}^{n_{0}}\int_{R_{n}}\frac{1}{\left\vert
1-\bar{\zeta}z\right\vert ^{2\theta+2}\left\vert \Lambda\right\vert ^{3/2}}.
\end{align*}
Next we estimate each integral using (\ref{r0}) or (\ref{rn}) and the fact
that $\operatorname{Area}(R_{n})\lesssim2^{2n}\gamma^{2}\sim2^{2n}\delta^{2}.$
We continue with
\begin{align*}
\left\Vert G_{1}\right\Vert _{_{\mathcal{D}}}^{2}  &  \lesssim\delta
^{2\theta+2}\sum_{n=0}^{n_{0}}2^{2n}\frac{1}{\left(  2^{n}\delta\right)
^{2\theta+2}\left(  1+\left\vert \log2^{n}\gamma\right\vert \right)  ^{3/2}}\\
&  \lesssim\sum_{n=0}^{n_{0}}\frac{1}{2^{2n\theta}\left(  1+\left\vert
\log2^{n}\gamma\right\vert \right)  ^{3/2}}.\\
&  \lesssim\sum_{n=0}^{A}+\sum_{n=A}^{n_{0}}%
\end{align*}
where $A$ is the largest integer for which $2^{A}\sqrt{\gamma}<1.$ Thus
\begin{align}
\left\Vert G_{1}\right\Vert _{_{\mathcal{D}}}^{2}  &  \lesssim\sum_{n=0}%
^{A}+\sum_{n=A}^{n_{0}}\nonumber\\
&  \lesssim\frac{1}{\left\vert \log\gamma\right\vert ^{3/2}}\sum_{n=0}%
^{A}\frac{1}{2^{2n\theta}}+\sum_{n=A}^{n_{0}}\frac{1}{2^{2n\theta}}\nonumber\\
&  \lesssim\frac{1}{\left\vert \log\gamma\right\vert ^{3/2}}+\frac{1}{\left(
2^{A}\right)  ^{2\theta}}\lesssim\frac{1}{\left\vert \log\gamma\right\vert
^{3/2}}+\delta^{\theta}\nonumber\\
&  \lesssim\frac{1}{\left\vert \log\gamma\right\vert ^{3/2}}\sim\frac
{1}{\left\vert \log\delta\right\vert ^{3/2}}. \label{g1}%
\end{align}
Next%
\begin{align}
\left\Vert G_{2}\right\Vert _{_{\mathcal{D}}}^{2}  &  =\int_{\mathbb{D}%
}\left\vert G_{2}^{\prime}\right\vert ^{2}\lesssim\int_{\mathbb{D}}\frac
{1}{\left\vert 1-\bar{\zeta}z\right\vert ^{2}\left\vert \Lambda\right\vert
^{1/2}}\nonumber\\
&  \lesssim\sum_{n=0}^{n_{0}}\int_{R_{n}}\frac{1}{\left\vert 1-\bar{\zeta
}z\right\vert ^{2}\left\vert \Lambda\right\vert ^{1/2}}\nonumber\\
&  \lesssim\sum_{n=0}^{n_{0}}\left(  2^{n}\delta\right)  ^{2}\frac{1}{\left(
2^{n}\delta\right)  ^{2}\left(  1+\left\vert \log2^{n}\gamma\right\vert
\right)  ^{1/2}}\nonumber\\
&  \lesssim\sum_{n<c\left\vert \log\delta\right\vert }\frac{1}{\left(
1+\left\vert \log\gamma\right\vert -n\log2\right)  ^{1/2}}\nonumber\\
&  \lesssim\left\vert \log\gamma\right\vert ^{1/2}\sim\left\vert \log
\delta\right\vert ^{1/2}. \label{g2}%
\end{align}
Using (\ref{g1}) and (\ref{g2}) in (\ref{product}) and recalling that
$L(\zeta)\sim\left\vert \log\delta(z)\right\vert $ completes the proof of the lemma.
\end{proof}
\end{lemma}

We now have the upper estimates for (\ref{xderiv}) and (\ref{bump}). Next pick
and fix $\theta>0$ and $\zeta\in\mathcal{D}$. Recalling the duality of
$\mathcal{D}\odot\mathcal{D}$ and $\mathcal{X}$ we have
\[
\delta(\zeta)^{-1}\sim\left\vert \left.  \frac{d}{dz}G_{\zeta,\theta
}(z)\right\vert _{z=\zeta}\right\vert =\left\vert \left\langle G_{\zeta
,\theta},\bar{\partial}_{\zeta}k_{\zeta}\right\rangle \right\vert
\leq\left\Vert G_{\zeta,\theta}\right\Vert _{\mathcal{D}\odot\mathcal{D}%
}\left\Vert \bar{\partial}_{\zeta}k_{\zeta}\right\Vert _{\mathcal{X}}%
\]
which, given the upper estimates, forces the corresponding lower estimates.

We move to estimates for norms in $\mathcal{D}\odot\mathcal{D}$. First we
consider (\ref{tval}). By (\ref{square}) it is enough to show that $\left\Vert
k_{\zeta}^{1/2}\right\Vert _{\mathcal{D}}^{2}\lesssim\log(1+L(\zeta)).$ We
argue as in the previous lemma. Continuing the notation from that lemma we
have
\begin{align*}
\left\Vert k_{\zeta}^{1/2}\right\Vert _{\mathcal{D}}^{2}  &  \lesssim
\int_{\mathbb{D}}\left\vert k_{\zeta}^{-1/2}k_{\zeta}^{\prime}\right\vert
^{2}\sim\int_{\mathbb{D}}\frac{1}{\left\vert 1-\bar{\zeta}z\right\vert
^{2}\left\vert \log(1-\zeta z)\right\vert }\\
&  \lesssim\sum_{n=0}^{n_{0}}\left(  2^{n}\delta\right)  ^{2}\frac{1}{\left(
2^{n}\delta\right)  ^{2}\inf\left\{  \left\vert \log(1-\zeta z)\right\vert
:z\in R_{n}\right\}  }\\
&  \lesssim\sum_{n<c\left\vert \log\delta\right\vert }\frac{1}{\left(
1+\left\vert \log\gamma\right\vert -n\log2\right)  }\\
&  \lesssim\left\vert \log\left(  c_{1}+c_{2}\left\vert \log\delta\right\vert
\right)  \right\vert ^{1/2}%
\end{align*}
for some positive constant $c_{1},c.$ Recalling the relationship between
$\delta$ and $L$ completes the proof of the upper estimate for (\ref{tval}).

The upper estimate for (\ref{tsquare}) is an immediate consequence of
(\ref{square}) and (\ref{x}). The lower estimate is a consequence of the upper
estimate for (\ref{xval}), duality and the computation%
\[
L(\zeta)^{2}=\left\langle k_{\zeta}^{2},k_{\zeta}\right\rangle \leq\left\Vert
k_{\zeta}^{2}\right\Vert _{\mathcal{D}\odot\mathcal{D}}\left\Vert k_{\zeta
}\right\Vert _{\mathcal{X}}\sim\left\Vert k_{\zeta}^{2}\right\Vert
_{\mathcal{D}\odot\mathcal{D}}L(\zeta).
\]

For (\ref{tderiv}) one checks that
\[
\left\vert \frac{d}{dz}\left(  \bar{\partial}_{\zeta}k_{\zeta}\right)
\right\vert \sim\frac{1}{\delta}\left\vert \frac{1-\left\vert \zeta\right\vert
^{2}}{1-\bar{\zeta}z}\right\vert .
\]
The upper and lower estimates (\ref{tderiv}) now follow from those in
(\ref{bump}).

We now establish (\ref{H}) and hence also the lower estimate for
(\ref{tderiv}). If $\mu$ is a positive measure supported on $\left[
1/2,1\right]  $ which satisfies
\begin{equation}
\mu([x,1])\lesssim\frac{-1}{\log\left(  1-x\right)  }. \label{total}%
\end{equation}
then $\mu\in CM(\mathcal{D)}$ see, for instance, \cite{ARS}. In particular the
measure on $\left[  1/2,1\right]  $ given by
\[
d\mu(x)=\frac{1}{\left(  1-x\right)  \left[  \log\left(  1-x\right)  \right]
^{2}}dx
\]
is in $CM(\mathcal{D)}$. Hence by Theorem \ref{baylage} below we know that
$\left\Vert H\right\Vert _{\mathcal{X}}<\infty.$ Next we estimate $H(\zeta)$
for $\zeta\in(.9,1).$ Pick and fix $\zeta.$
\begin{align*}
H(\zeta)  &  =\int_{1/2}^{1}\log\left(  \frac{1}{1-\zeta x}\right)  \frac
{1}{\left(  1-x\right)  \left[  \log\left(  1-x\right)  \right]  ^{2}}dx\\
&  =\int_{1/2}^{\zeta}\cdot\cdot\cdot+\int_{\zeta}^{1}\cdot\cdot\cdot.
\end{align*}
Now note that
\begin{align*}
\frac{1-\zeta x}{1-x}  &  =O(1);\text{ }\frac{1}{2}\leq x\leq\zeta,\\
\frac{1-\zeta x}{1-\zeta}  &  =O(1);\text{ }\zeta\leq x\leq1,\text{ and}\\
\int_{1/2}^{1}d\mu &  <\infty.
\end{align*}
Hence we can continue our estimate of $H(\zeta)$ with
\begin{align*}
H(\zeta)  &  =\int_{1/2}^{\zeta}-\log(1-x)\frac{dx}{\left(  1-x\right)
\left[  \log\left(  1-x\right)  \right]  ^{2}}\\
&  \qquad\qquad-\log\left(  1-\zeta\right)  \int_{\zeta}^{1}\frac{dx}{\left(
1-x\right)  \left[  \log\left(  1-x\right)  \right]  ^{2}}+O(1)\\
&  =\int_{1/2}^{\zeta}\frac{-dx}{\left(  1-x\right)  \log\left(  1-x\right)
}-\log\left(  1-\zeta\right)  \int_{\zeta}^{1}\frac{dx}{\left(  1-x\right)
\left[  \log\left(  1-x\right)  \right]  ^{2}}+O(1)\\
&  =\log(\left\vert \log(1-\zeta)\right\vert )+O(1)-\log\left(  1-\zeta
\right)  \frac{-1}{\log(1-\zeta)}\\
&  =\log(\left\vert \log(1-\zeta)\right\vert )+O(1)
\end{align*}
as required.

To obtain the upper estimate in$\ ($\ref{ten}) note that $\left\Vert
z^{n}\right\Vert _{\mathcal{D}\odot\mathcal{D}}\leq\left\Vert z^{n}\right\Vert
_{\mathcal{D}}\left\Vert 1\right\Vert _{\mathcal{D}}\sim\sqrt{n}.$ To obtain
the upper estimate in$\ ($\ref{X}) we compute the Carleson measure norm of the
measure $d\mu=\left\vert f^{\prime}\right\vert ^{2}dxdy$ for the function
$f(z)=z^{n}.$ The measure only depends on $\left\vert z\right\vert $ hence the
norm is the maximum of the quantities
\[
A_{k}=\frac{1}{\left\Vert z^{k}\right\Vert _{\mathcal{D}}}\left(
\int\left\vert z^{k}\right\vert ^{2}d\mu\right)  ^{1/2},\text{ }k=0,1,2,...
\]
Doing the integration yields $A_{k}\sim n/\sqrt{\left(  k+1\right)  \left(
n+k+1\right)  }$ which has a maximum $n/\sqrt{n+1}$ $\sim\sqrt{n}$ at
$k=0.$These two upper estimates imply the two lower estimates because
$n\sim\left\langle z^{n},z^{n}\right\rangle \lesssim\left\Vert z^{n}%
\right\Vert _{\mathcal{D}\odot\mathcal{D}}\left\Vert z^{n}\right\Vert
_{\mathcal{X}}.$
\end{proof}

\begin{remark}
There is an alternative approach to the upper estimate in (\ref{tval}), the
growth estimates for $f\in\mathcal{X}$. If $f\in\mathcal{X}$ then for
$k=0,1,2,...$
\begin{align*}
\left\Vert f^{k+1}\right\Vert _{\mathcal{D}}^{2}  &  =\int\int\left\vert
\frac{d}{dz}f^{k+1}\right\vert ^{2}dA\\
&  =\int\int\left(  k+1\right)  ^{2}\left\vert f^{\prime}\right\vert
^{2}\left\vert f^{k}\right\vert ^{2}dA\\
&  \leq\left(  k+1\right)  ^{2}\left\Vert \left\vert f^{\prime}\right\vert
^{2}dA\right\Vert _{CM(\mathcal{D)}}\left\Vert f^{k}\right\Vert _{\mathcal{D}%
}^{2}\\
&  \leq\left(  k+1\right)  ^{2}\left\Vert f\right\Vert _{\mathcal{X}}%
^{2}\left\Vert f^{k}\right\Vert _{\mathcal{D}}^{2}.
\end{align*}
This computation gives both the starting and inductive step in showing
$\left\Vert f^{k}\right\Vert _{\mathcal{D}}=O(k!)\left\Vert f\right\Vert
_{\mathcal{X}}.$ Using those estimates we see that if $\varepsilon$ is small
we can sum the series for $g=\exp\left(  \varepsilon f\right)  $ and conclude
that $g\in\mathcal{D}$. The upper estimate in (\ref{tval}) follows from
applying (\ref{dvalue}) to the function $g\in$ $\mathcal{D}.$
\end{remark}

\section{Coefficients}

The norm of a function in $\mathcal{D}$ is unchanged if each Taylor
coefficient is replaced by its modulus$.$ This has consequences for the Taylor
coefficients of functions in $\mathcal{D}\odot\mathcal{D}$ and those in
$\mathcal{X}$.

\begin{theorem}
We have

\begin{enumerate}
\item If $a(z)=\sum a_{n}z^{n}\in\mathcal{D}\odot\mathcal{D}$ then there is a
$b(z)=\sum b_{n}z^{n}\in\mathcal{D}\odot\mathcal{D}$ with $\left\vert
a_{n}\right\vert \leq b_{n}$ and $\left\Vert b\right\Vert _{\mathcal{D}%
\odot\mathcal{D}}\leq C\left\Vert a\right\Vert _{\mathcal{D}\odot\mathcal{D}%
}.$

\item Suppose $c(z)=\sum c_{n}z^{n}\in\mathcal{X}$ with $c_{n}\geq0.$ Given
$\left\{  d_{n}\right\}  $ with $\left\vert d_{n}\right\vert \leq c_{n}$ then
$d(z)=\sum d_{n}z^{n}\in\mathcal{X}$ and $\left\Vert d\right\Vert
_{\mathcal{X}}\leq C\left\Vert c\right\Vert _{\mathcal{X}}.$
\end{enumerate}

\begin{proof}
The first statement is a direct consequence of the definitions and the comment
before the theorem. The second follows from the first and the duality
statement; item (2) in Theorem \ref{duality}.
\end{proof}
\end{theorem}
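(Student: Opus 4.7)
The plan is to derive both parts from the observation made just before the theorem: since $\|\sum a_n z^n\|_\mathcal{D}^2 = \sum (n+1)|a_n|^2$, the operation of replacing each Taylor coefficient of an $f\in\mathcal{D}$ by its modulus preserves the $\mathcal{D}$-norm. Both statements follow by feeding this symmetry into the structural descriptions of $\mathcal{D}\odot\mathcal{D}$ and $\mathcal{X}$.

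For part (1), given $\varepsilon>0$, I would fix a near-optimal factorization
\[
a=\sum_{j=1}^N f_j g_j,\qquad \sum_{j=1}^N\|f_j\|_\mathcal{D}\|g_j\|_\mathcal{D}\leq (1+\varepsilon)\|a\|_{\mathcal{D}\odot\mathcal{D}},
\]
writing $f_j(z)=\sum_k\alpha_{j,k}z^k$ and $g_j(z)=\sum_m\beta_{j,m}z^m$. Setting $\tilde f_j(z)=\sum_k|\alpha_{j,k}|z^k$, $\tilde g_j(z)=\sum_m|\beta_{j,m}|z^m$, and $b=\sum_{j=1}^N\tilde f_j\tilde g_j$, one checks by the triangle inequality that the $n$-th Taylor coefficient of $b$ satisfies
\[
b_n=\sum_{j=1}^N\sum_{k+m=n}|\alpha_{j,k}||\beta_{j,m}|\geq |a_n|,
\]
while the observation above gives $\|\tilde f_j\|_\mathcal{D}=\|f_j\|_\mathcal{D}$ and $\|\tilde g_j\|_\mathcal{D}=\|g_j\|_\mathcal{D}$. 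The definition of the $\mathcal{D}\odot\mathcal{D}$-norm then yields
\[
\|b\|_{\mathcal{D}\odot\mathcal{D}}\leq \sum_{j=1}^N\|\tilde f_j\|_\mathcal{D}\|\tilde g_j\|_\mathcal{D}\leq (1+\varepsilon)\|a\|_{\mathcal{D}\odot\mathcal{D}},
\]
and letting $\varepsilon\to 0$ proves (1) with $C=1$.

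For part (2), I would pass to the duality $(\mathcal{D}\odot\mathcal{D})^*=\mathcal{X}$ from Theorem \ref{duality} under the Dirichlet pairing, computing
\[
\|d\|_\mathcal{X}\sim \sup\{|\langle h,d\rangle_\mathcal{D}| : h \text{ a polynomial with } \|h\|_{\mathcal{D}\odot\mathcal{D}}\leq 1\}.
\]
For such a test $h=\sum h_n z^n$, apply part (1) to produce $b=\sum b_n z^n$ with $|h_n|\leq b_n$ and $\|b\|_{\mathcal{D}\odot\mathcal{D}}\leq C$. Combining $\langle h,d\rangle_\mathcal{D}=\sum_n(n+1)h_n\overline{d_n}$ with $|d_n|\leq c_n$ and the positivity of $b_n,c_n$ gives
\[
|\langle h,d\rangle_\mathcal{D}|\leq \sum_n(n+1)b_n c_n=\langle b,c\rangle_\mathcal{D}\leq \|b\|_{\mathcal{D}\odot\mathcal{D}}\|c\|_\mathcal{X}\leq C\|c\|_\mathcal{X}.
\]
Taking the supremum over $h$ yields $\|d\|_\mathcal{X}\lesssim C\|c\|_\mathcal{X}$, as required.

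The argument is essentially bookkeeping once the coefficient-absolutization step is in place, so there is no hard analytic obstacle. The minor points to keep track of are that the norm equivalence in Theorem \ref{duality} contributes absolute constants that have to be absorbed into the final $C$, and that one should restrict the test functions in part (2) to polynomials so that the pairing $\langle h,d\rangle_\mathcal{D}$ is a finite sum and part (1) can be applied to a genuinely finite factorization.
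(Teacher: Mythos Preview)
Your proof is correct and follows exactly the approach the paper sketches: part (1) is obtained by passing to absolute values of the Taylor coefficients in a near-optimal factorization, and part (2) by pairing against such a majorant via the duality $(\mathcal{D}\odot\mathcal{D})^*=\mathcal{X}$. The only small technical point is justifying $\sum_n(n+1)b_nc_n\le\|b\|_{\mathcal{D}\odot\mathcal{D}}\|c\|_{\mathcal{X}}$ when $b$ is not a polynomial; this is easily handled either by dilating $b$ or, more directly, by truncating the factors $f_j,g_j$ in part (1) to degree $\deg h$ so that the resulting $b$ is itself a polynomial.
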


We will call a sequence of positive integers $n_{1}<n_{2}<...$ lacunary if
there is a $q>1$ so that $\forall k,n_{k+1}/n_{k}>q$ and we say that a
function $d(z)=\sum d_{n}z^{n}\ $has a lacunary poser series if $\left\{
n:d_{n}\neq0\right\}  $ is a lacunary sequence.

As we note in the proof, parts of the following theorem were first obtained by
Brown and Shields \cite{BS} building on earlier work by Taylor \cite{T}.

\begin{theorem}
We have

\begin{enumerate}
\item If $a(z)=\sum a_{n}z^{n}\in\mathcal{D}\odot\mathcal{D}$ then $\sum
\frac{\left\vert a_{n}\right\vert }{1+\log(n+1)}<\infty.$

\item Suppose $b(z)=\sum b_{n}z^{n};$

\begin{enumerate}
\item If $\left\vert b_{n}\right\vert \leq\frac{C}{\left(  n+1\right)  \left(
1+\log(n+1)\right)  }$ then $b(z)\in\mathcal{X}.$

\item If $\sum n\log\left(  n+1\right)  \left\vert b_{n}\right\vert
^{2}<\infty$ then $b(z)\in\mathcal{X}.$
\end{enumerate}

\item Suppose $d(z)$ has a lacunary power series; then the following are equivalent:

\begin{enumerate}
\item $d\in\mathcal{M}(\mathcal{D}),$

\item $d\in\mathcal{X},$

\item $d\in\mathcal{D},$

\item $d\in\mathcal{D}\odot\mathcal{D}$.
\end{enumerate}

\item Suppose $f(z)=\sum f_{n}z^{n}\in\mathcal{D}\odot\mathcal{D}$; then for
any lacunary set N,
\[
\sum_{n\in N}n\left\vert f_{n}\right\vert ^{2}<\infty.
\]

\end{enumerate}
\end{theorem}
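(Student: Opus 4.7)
My plan is to prove the four parts in the order (2a), (2b), (1), (3), (4), since the duality $(\mathcal{D}\odot\mathcal{D})^{\ast}=\mathcal{X}$ lets (1) fall out of (2a) and (4) fall out of (3).

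For (2a) the preceding coefficient-domination theorem (part 2 of the theorem just proved) reduces matters to exhibiting a single function in $\mathcal{X}$ with nonnegative coefficients comparable to $1/((n+1)(1+\log(n+1)))$. The function $H$ from part (3) of the previous theorem is engineered for this: its Taylor coefficients are $H_{n}=n^{-1}\int_{1/2}^{1}x^{n}\,d\mu(x)$ with $d\mu(x)=dx/((1-x)[\log(1-x)]^{2})$, and a Laplace-type analysis (the integrand concentrates near $x=1$ at scale $1/n$ with tail mass $\int_{1-1/n}^{1}d\mu\sim 1/\log n$) gives $H_{n}\sim 1/(n\log n)$. For (2b), I pass through duality with $\mathcal{D}\odot\mathcal{D}$: by the polarization identity \eqref{pol} and a rescaling $f_{j}g_{j}=(\lambda f_{j})(g_{j}/\lambda)$ chosen so the two squared Dirichlet norms coincide, it suffices to bound $|\langle h^{2},b\rangle_{\mathcal{D}}|$ by $\|h\|_{\mathcal{D}}^{2}$. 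Direct expansion with $a_{k}=\sqrt{k+1}\,h_{k}$ (so $\|a\|_{\ell^{2}}=\|h\|_{\mathcal{D}}$) gives
\[
\langle h^{2},b\rangle_{\mathcal{D}}=\sum_{k,l\ge 0}M_{kl}\,a_{k}a_{l},\qquad M_{kl}=\frac{(k+l+1)\,\bar{b}_{k+l}}{\sqrt{(k+1)(l+1)}},
\]
and Cauchy--Schwarz bounds the bilinear form by $\|M\|_{HS}\|a\|_{\ell^{2}}^{2}$. Summing $|M_{kl}|^{2}$ along antidiagonals $k+l=n$ using $\sum_{k=0}^{n}\frac{1}{(k+1)(n-k+1)}\sim\frac{2\log(n+1)}{n+1}$ yields $\|M\|_{HS}^{2}\sim\sum n\log(n+1)|b_{n}|^{2}$, finite by hypothesis.

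Part (1) is then immediate: given $a\in\mathcal{D}\odot\mathcal{D}$, pair it with $g(z)=\sum\epsilon_{n}z^{n}/((n+1)(1+\log(n+1)))$ where $\epsilon_{n}$ is chosen so that $(n+1)a_{n}\bar{g}_{n}=|a_{n}|/(1+\log(n+1))$. By (2a), $g\in\mathcal{X}$ with norm bounded by an absolute constant, and $\sum\frac{|a_{n}|}{1+\log(n+1)}=\langle a,g\rangle_{\mathcal{D}}\le\|a\|_{\mathcal{D}\odot\mathcal{D}}\|g\|_{\mathcal{X}}$.

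For (3), the inclusion chain \eqref{dirinc} reduces the four-way equivalence to closing the loop, and the step $\mathcal{D}\odot\mathcal{D}\Rightarrow\mathcal{D}$ for lacunary series is precisely a special case of (4). The remaining implication $\mathcal{D}\Rightarrow\mathcal{M}(\mathcal{D})$ for lacunary $d=\sum d_{k}z^{n_{k}}$ proceeds in two stages: first, Cauchy--Schwarz combined with lacunarity gives $\sum|d_{k}|\le(\sum n_{k}|d_{k}|^{2})^{1/2}(\sum 1/n_{k})^{1/2}<\infty$, so $d\in H^{\infty}$; second, one shows $d\in\mathcal{X}$, either by citing the Brown--Shields result \cite{BS} or by a direct dyadic-annular Carleson estimate for $|d'|^{2}dA$ exploiting the fact that on $|z|\sim 1-2^{-j}$ only the $O(1)$-many lacunary frequencies $n_{k}\sim 2^{j}$ dominate $|d'|$.

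For (4), given $f\in\mathcal{D}\odot\mathcal{D}$ and a lacunary set $N$, form the truncations $h_{S}(z)=\sum_{n\in N,\,n\le S}\bar{f}_{n}z^{n}$. Since $h_{S}$ has lacunary support, (3) gives $\|h_{S}\|_{\mathcal{X}}\lesssim\|h_{S}\|_{\mathcal{D}}=\bigl(\sum_{n\in N,\,n\le S}(n+1)|f_{n}|^{2}\bigr)^{1/2}$. Combining the identity $\langle f,h_{S}\rangle_{\mathcal{D}}=\sum_{n\in N,\,n\le S}(n+1)|f_{n}|^{2}$ with the duality estimate yields
\[
\sum_{n\in N,\,n\le S}(n+1)|f_{n}|^{2}\le\|f\|_{\mathcal{D}\odot\mathcal{D}}\|h_{S}\|_{\mathcal{X}}\lesssim\|f\|_{\mathcal{D}\odot\mathcal{D}}\Bigl(\sum_{n\in N,\,n\le S}(n+1)|f_{n}|^{2}\Bigr)^{1/2},
\]
so $\sum_{n\in N,\,n\le S}(n+1)|f_{n}|^{2}\le C\|f\|_{\mathcal{D}\odot\mathcal{D}}^{2}$, and letting $S\to\infty$ finishes. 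The main obstacle I expect is the lacunary $\mathcal{D}\subset\mathcal{X}$ step in (3) if one wants a self-contained proof; everything else is essentially bookkeeping or routine duality once that is in hand.
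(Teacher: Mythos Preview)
Your proof is correct and close in spirit to the paper's, but you have reversed the logical dependence between (1) and (2a), and this is worth noting. The paper proves (1) first: after reducing to $a_n\ge 0$ via part (1) of the preceding coefficient theorem, it integrates $a$ against the Carleson measure $d\mu(x)=\frac{dx}{(1-x)[\log(1-x)]^{2}}$ on $(0,1)$, using the lower bound $\int r^{n}\,d\mu\gtrsim 1/\log n$ to extract $\sum a_n/(1+\log(n+1))$. It then obtains (2a) from (1) by duality and part (2) of the preceding theorem. You instead establish (2a) first, by observing that the function $H$ is exactly $P_{\mathcal D}\bar\mu$ for this same $\mu$, with nonnegative coefficients $H_n\gtrsim 1/(n\log n)$, so coefficient domination applies; then you get (1) by pairing with a unimodular twist of the extremal coefficient sequence. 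The two routes are dual views of the same object~$\mu$, and yours has the mild advantage of not needing the positivity reduction (part (1) of the previous theorem). For (2b) you supply the Hilbert--Schmidt computation directly, whereas the paper cites Brown--Shields here and carries out essentially the same calculation later in Theorem~\ref{sch}(2). Your treatment of (3) and (4) matches the paper's: both rely on the Brown--Shields implication (lacunary $\mathcal D\Rightarrow\mathcal M(\mathcal D)$) to get (3c)$\Rightarrow$(3a), and both close (3d)$\Rightarrow$(3c) by the same duality pairing with a lacunary test function, which is exactly your argument for (4). One presentational point: when you invoke ``(3)'' inside the proof of (4), make explicit that you are using only the already-established implication (3c)$\Rightarrow$(3b), so the reader does not suspect circularity with the deferred step (3d)$\Rightarrow$(3c).
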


\begin{proof}
By the first part of the previous theorem it suffices to prove the first
statement for a power series with positive coefficients, $a(z)=\sum a_{n}%
z^{n},$ $a_{n}\geq0.$ In that case we know $a(x)\geq0$ for $0\leq x<1. $
Select a positive measure $\mu$ supported on the interval $\left(  0,1\right)
$ with the property that $\mu((x,1))\sim(\log\left(  1-x\right)  )^{-1}.$ Such
a measure will be a Carleson measure for $\mathcal{D}$. Hence $a\rightarrow
\int ad\mu$ is a bounded linear functional on $\mathcal{D}\odot\mathcal{D}$
(Proposition \ref{prop} below). We also have
\[
\int r^{n}d\mu\geq\int\limits_{1-1/n}^{1}r^{n}d\mu\geq c\int\limits_{1-1/n}%
^{1}d\mu\geq\frac{c}{\left\vert \log\left(  1/n\right)  \right\vert }.
\]
Thus
\[
\left\Vert a\right\Vert \geq C\int ad\mu\geq C^{\prime}\sum\frac{a_{n}}%
{1+\log(n+1)}.
\]
Part (2a) follows from part (1) together with part (2) of the previous
theorem. The statement (2b) is a result of Brown and Shields. Although they
use different language they prove (2b) on page 299 of \cite{BS}.

We turn to (3). By the inclusions (\ref{dirinc}) we have (3a) $\Longrightarrow
$ (3b) $\Longrightarrow$ (3c) $\Longrightarrow$ (3d). Proposition 20 of
\cite{BS} is the statement that for lacunary series (3c) $\Longrightarrow$
(3a). To finish we show that (3d) $\Longrightarrow$ (3c). Suppose $d(z)$ is
given by a lacunary series and is in $\mathcal{D\odot D}$.\ We want to show
$d\in\mathcal{D}.$ Because $\mathcal{D}$ is a Hilbert space it suffices to
have good estimates of $\left\vert \left\langle d,h\right\rangle \right\vert $
for $h\in\mathcal{D}$, $\left\Vert h\right\Vert =1.$ If we replace $h$ by $j$
which has the same Taylor coefficients as $h$ for the indices for which
$d_{n}\neq0$ and has its other coefficients $0$ then we have both
$\left\langle d,h\right\rangle =\left\langle d,j\right\rangle $ and
$\left\Vert j\right\Vert \leq$ $\left\Vert h\right\Vert .\ $Hence it suffices
to estimate $\left\vert \left\langle d,j\right\rangle \right\vert .$ Using the
fact that (3c) $\Longrightarrow$ (3a), the inclusions (\ref{dirinc}), and item
(2) of Theorem \ref{duality} we have%
\[
j\in\mathcal{M}(\mathcal{D})\subset\mathcal{X=}\left(  \mathcal{D\odot
D}\right)  ^{\ast}.
\]
Hence $d$, which we assumed was in $\mathcal{D\odot D}$ pairs with $j$ with
the appropriate estimates.

To prove (4) note that by (3) given any sequence $\left\{  g_{n}\right\}
_{n\in N}$ with $\sum_{n\in N}n\left\vert g_{n}\right\vert ^{2}=1$ the
function $g=\sum g_{n}z^{n}$ is in $\mathcal{X}$ with uniformly bounded norm.
Pairing $g$ with $f$ and taking the supremum over $g$ gives the conclusion.
\end{proof}

Part (2a) of the theorem rests on the fact that Carleson measures supported on
the interval (0,1) are easy to characterize. Part (2b) of the theorem rests on
the fact that it is easy to characterize the measures $\mu$ for which the
natural densely defined inclusion of $\mathcal{D}$ into $L^{2}(\mu)$ extends
to a map in the Hilbert Schmidt class; see the second part of Theorem
\ref{sch} below.

With one exception these results are analogous to Hardy space results. Part
(1) is the analog of Hardy's inequality which states that if $a\in H^{1}$ then
$\sum\left\vert a_{n}\right\vert /\left(  n+1\right)  <\infty.$ The duality of
$H^{1}$ and $BMO$ then gives an analog of (2a). Statement (2b) is the analog
of the fact that the \textit{Hilbert} \textit{space} $\mathcal{D}$ is
contained in $BMO$, a standard result which can be given a simple proof by
adapting the proof of (2b) in \cite{BS}. Statement (3) is the analog of the
basic Littlewood-Paley result for the Hardy spaces: if $f(z)$ has a lacunary
power series and is in one of the spaces $H^{p},$ $p>0,$ or $BMO$ then it is
in all of them. This is where there is a small exception to the general
analogy. It is straightforward that having $f$ in $H^{2}$ with a lacunary
power series does not force $f\in H^{\infty}$ and it is equally
straightforward that if such an $f$ is in $\mathcal{D}$ then it is bounded.
The final statement is the analog of Paley's theorem that if $f\in H^{1}$ and
$N$ is a lacunary set then $\sum_{n\in N}\left\vert f_{n}\right\vert
^{2}<\infty.$

\section{Carleson Measures and Interpolation}

\subsection{Carleson Measures}

We will say $\mu$ is a Carleson measure for $\mathcal{D}\odot\mathcal{D}$,
$\mu\in CM(\mathcal{D}\odot\mathcal{D)}$, if there is a $C>0$ so that for all
$f$ in $\mathcal{D}\odot\mathcal{D}$
\[
\int_{\mathbb{D}}\left\vert f\right\vert d\mu\leq C\left\Vert f\right\Vert
_{\mathcal{D}\odot\mathcal{D}}.
\]

\begin{proposition}
\label{prop}$CM(\mathcal{D}\odot\mathcal{D)}=CM(\mathcal{D)}.$

\begin{proof}
This follows immediately from the definitions, (\ref{square}), and the
Cauchy-Schwarz inequality.
\end{proof}
\end{proposition}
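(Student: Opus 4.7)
The plan is to prove both inclusions separately, each by a short one-line computation that sits at the interface of the two definitions.

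For the inclusion $CM(\mathcal{D}) \subset CM(\mathcal{D}\odot\mathcal{D})$, I would start with $\mu\in CM(\mathcal{D})$ and an arbitrary $h\in\mathcal{D}\odot\mathcal{D}$, and fix a factorization $h=\sum f_j g_j$ whose total $\sum\|f_j\|_{\mathcal{D}}\|g_j\|_{\mathcal{D}}$ comes within $\varepsilon$ of $\|h\|_{\mathcal{D}\odot\mathcal{D}}$. Applying the Cauchy--Schwarz inequality in $L^2(\mu)$ to each product $f_jg_j$ and then the Carleson estimate for $\mathcal{D}$ to each factor gives
\[
\int_{\mathbb{D}}|h|\,d\mu \;\le\; \sum_j \Bigl(\int|f_j|^2d\mu\Bigr)^{1/2}\Bigl(\int|g_j|^2d\mu\Bigr)^{1/2} \;\le\; \|\mu\|_{CM(\mathcal{D})}^{2}\sum_j\|f_j\|_{\mathcal{D}}\|g_j\|_{\mathcal{D}}.
\]
Taking the infimum over factorizations yields the $CM(\mathcal{D}\odot\mathcal{D})$ bound.

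For the reverse inclusion, the only tool needed is the squaring estimate (\ref{square}). Given $\mu\in CM(\mathcal{D}\odot\mathcal{D})$ and $f\in\mathcal{D}$, I would apply the $CM(\mathcal{D}\odot\mathcal{D})$ bound to $f^2\in\mathcal{D}\odot\mathcal{D}$ and then invoke (\ref{square}):
\[
\int_{\mathbb{D}}|f|^2\,d\mu \;=\; \int_{\mathbb{D}}|f^2|\,d\mu \;\le\; \|\mu\|_{CM(\mathcal{D}\odot\mathcal{D})}\|f^2\|_{\mathcal{D}\odot\mathcal{D}} \;\le\; \|\mu\|_{CM(\mathcal{D}\odot\mathcal{D})}\|f\|_{\mathcal{D}}^{2}.
\]

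There is no real obstacle here; both directions are mechanical once one recognises which ingredient to invoke. The only point worth naming is that the two definitions use different powers of $f$ (the $\mathcal{D}$-Carleson condition is quadratic, the $\mathcal{D}\odot\mathcal{D}$-Carleson condition is linear), and the bridge between them is precisely the identification $f\mapsto f^2$ together with Cauchy--Schwarz, so the argument runs symmetrically in both directions with comparable norm constants.
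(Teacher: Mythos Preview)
Your proof is correct and is precisely the argument the paper has in mind: one inclusion uses Cauchy--Schwarz on a near-optimal factorization, the other uses (\ref{square}) applied to $f^2$. There is nothing to add.
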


One can also ask for which \textit{complex} measures $\int_{\mathbb{D}}fd\mu$
will be bounded; that is, when are there estimates%

\begin{align}
\left\vert \int_{\mathbb{D}}fd\mu\right\vert  &  \leq C\left\Vert f\right\Vert
_{\mathcal{D}\odot\mathcal{D}}\text{ }\forall f\in\mathcal{D}\odot
\mathcal{D}\text{,}\label{a}\\
\left\vert \int_{\mathbb{D}}f^{2}d\mu\right\vert  &  \leq C\left\Vert
f\right\Vert _{\mathcal{D}}^{2}\text{ }\forall f\in\mathcal{D}\text{?}
\label{b}%
\end{align}
The answer is the same in both cases. Given a finite complex measure $\mu$
define its Dirichlet projection $P_{\mathcal{D}}\bar{\mu}(w)$ by%
\[
P_{\mathcal{D}}\bar{\mu}(w)=\int_{\mathbb{D}}\log\left(  \frac{1}{1-w\bar{z}%
}\right)  d\bar{\mu}(z).
\]
(Here $\bar{\mu}$ is the complex conjugate of the measure $\mu.)$

\begin{theorem}
\label{baylage}Given a finite complex measure $\mu$ on the disk, estimate
(\ref{a}), or equivalently (\ref{b}), holds if and only if $P_{\mathcal{D}%
}\bar{\mu}\in\mathcal{X}$.

\begin{proof}
To obtain the first statement compute with monomials to check that
$\int_{\mathbb{D}}fd\mu=\left\langle f,P_{B}\mu\right\rangle $ and then invoke
part (2) of Theorem \ref{duality}. If $P_{\mathcal{D}}\mu\in\mathcal{X}$ then,
evaluating (\ref{a}) on the function $f^{2}$ and taking note of (\ref{square})
we see that (\ref{b}) holds. Finally we note that if $\mu$ is given and
(\ref{b}) holds then so does (\ref{a}). The reason is that, again, noting
(\ref{pol}), if $g\in\mathcal{D}\odot\mathcal{D}$ then $g$ can be written as
$g=\sum h_{j}^{2}$ with $h_{j}\in\mathcal{D}$ and $\sum\left\Vert
h_{j}\right\Vert _{\mathcal{D}}^{2}\leq C\left\Vert g\right\Vert
_{\mathcal{D}\odot\mathcal{D}}.$
\end{proof}
\end{theorem}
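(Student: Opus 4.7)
My plan is to reduce both estimates to a single statement about the functional $f\mapsto\langle f, P_{\mathcal{D}}\bar\mu\rangle_{\mathcal{D}}$ on $\mathcal{D}\odot\mathcal{D}$, and then invoke the duality $(\mathcal{D}\odot\mathcal{D})^{\ast}=\mathcal{X}$ from item (2) of Theorem~\ref{duality}. The three steps are: (i) identify the measure pairing with the Dirichlet pairing against $P_{\mathcal{D}}\bar\mu$, (ii) translate (\ref{a}) into $P_{\mathcal{D}}\bar\mu\in\mathcal{X}$ by duality, and (iii) show (\ref{a}) and (\ref{b}) are equivalent using (\ref{square}) and the polarization identity (\ref{pol}).

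For step (i), expanding the logarithm gives
\[
P_{\mathcal{D}}\bar\mu(w) = \sum_{k\geq 1}\frac{\overline{\hat\mu(k)}}{k}\,w^k, \qquad \hat\mu(k)=\int_{\mathbb{D}} z^k\,d\mu.
\]
Using the Dirichlet inner product in the form $f(0)\overline{g(0)}+\int f'\overline{g'}\,dA/\pi$, pairing $z^n$ with this series recovers $\hat\mu(n)$, and so by linearity $\int f\,d\mu = \langle f, P_{\mathcal{D}}\bar\mu\rangle_{\mathcal{D}}$ holds for every polynomial $f$. Since polynomials are dense in $\mathcal{D}\odot\mathcal{D}$, (\ref{a}) says precisely that this polynomial functional extends boundedly to $\mathcal{D}\odot\mathcal{D}$, which by item (2) of Theorem~\ref{duality} is equivalent to $P_{\mathcal{D}}\bar\mu\in\mathcal{X}$. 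That finishes steps (i) and (ii).

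For step (iii), the direction (\ref{a})$\Rightarrow$(\ref{b}) is immediate: given $f\in\mathcal{D}$, the estimate (\ref{square}) gives $\|f^2\|_{\mathcal{D}\odot\mathcal{D}}\leq\|f\|_{\mathcal{D}}^2$, and then (\ref{a}) applied to $f^2$ produces (\ref{b}). For (\ref{b})$\Rightarrow$(\ref{a}), take $g\in\mathcal{D}\odot\mathcal{D}$ with a near-optimal factorization $g=\sum_j f_jg_j$, rescale via $f_j\mapsto\alpha_jf_j$, $g_j\mapsto\alpha_j^{-1}g_j$ so that $\|f_j\|_{\mathcal{D}}=\|g_j\|_{\mathcal{D}}$, and apply (\ref{pol}) termwise to obtain a representation $g=\sum_i\pm h_i^2$ with $\sum\|h_i\|_{\mathcal{D}}^2\lesssim\sum\|f_j\|_{\mathcal{D}}\|g_j\|_{\mathcal{D}}\lesssim\|g\|_{\mathcal{D}\odot\mathcal{D}}$. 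Summing (\ref{b}) applied to each $h_i$ yields (\ref{a}).

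The only step requiring attention is the rescaling in (iii): without it the parallelogram identity only bounds $\sum\|h_i\|_{\mathcal{D}}^2$ by $\sum(\|f_j\|_{\mathcal{D}}^2+\|g_j\|_{\mathcal{D}}^2)$, which need not be controlled by the $\mathcal{D}\odot\mathcal{D}$-norm. Imposing $\|f_j\|_{\mathcal{D}}=\|g_j\|_{\mathcal{D}}$ (which leaves each product $f_jg_j$ unchanged) converts the parallelogram bound into $\sum\|f_j\|_{\mathcal{D}}\|g_j\|_{\mathcal{D}}$, which is exactly what the infimum defining $\|g\|_{\mathcal{D}\odot\mathcal{D}}$ delivers. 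Everything else is a definition chase or a direct application of (\ref{square}), (\ref{pol}), or Theorem~\ref{duality}.
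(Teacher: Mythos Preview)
Your argument is correct and follows the same route as the paper: identify $\int f\,d\mu$ with the Dirichlet pairing against $P_{\mathcal D}\bar\mu$ by computing on monomials, invoke the duality $(\mathcal D\odot\mathcal D)^\ast=\mathcal X$ for the equivalence of (\ref{a}) with $P_{\mathcal D}\bar\mu\in\mathcal X$, and pass between (\ref{a}) and (\ref{b}) via (\ref{square}) and the polarization identity (\ref{pol}). Your explicit rescaling $f_j\mapsto\alpha_jf_j$, $g_j\mapsto\alpha_j^{-1}g_j$ to force $\|f_j\|_{\mathcal D}=\|g_j\|_{\mathcal D}$ before applying (\ref{pol}) is exactly the point the paper suppresses when it asserts $g=\sum h_j^2$ with $\sum\|h_j\|_{\mathcal D}^2\le C\|g\|_{\mathcal D\odot\mathcal D}$; you have made that step precise.
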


\begin{corollary}
If $\mu\in CM(\mathcal{D)}$ then
\[
B_{\mu}\left(  w\right)  =\int_{\mathbb{D}}\left(  \pi+\arg\left(  1-w\bar
{z}\right)  \right)  \text{ }d\mu(z).
\]
is the real part of a function in $\mathcal{X}$. (We are using the branch of
$\arg$ for which $\left\vert \arg(\zeta)\right\vert \leq\pi.)$\ 

\begin{proof}
By bringing absolute values inside the integral we see that $\mu$ satisfies
(\ref{a}). Hence $P_{\mathcal{D}}\bar{\mu}\in\mathcal{X}$. Using the fact that
$\mu$ is real we find $B_{\mu}=\operatorname{Re}\left(  -iP_{\mathcal{D}}%
\bar{\mu}+C\right)  $ for come constant $C,$ as required.
\end{proof}
\end{corollary}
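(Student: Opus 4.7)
The plan is to derive this as an almost immediate corollary of Theorem \ref{baylage} applied to $\mu$ itself, once $B_\mu$ is identified with the imaginary part (up to a harmless real additive constant) of $P_\mathcal{D}\bar\mu$.

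First I would verify that $\mu$ meets the hypothesis of Theorem \ref{baylage}, namely estimate (\ref{a}). Because $\mu$ is positive and by Proposition \ref{prop} lies in $CM(\mathcal{D}\odot\mathcal{D})$, for any $f \in \mathcal{D}\odot\mathcal{D}$ one has
\[
\left|\int_\mathbb{D} f\,d\mu\right| \leq \int_\mathbb{D} |f|\,d\mu \leq \|\mu\|_{CM(\mathcal{D}\odot\mathcal{D})}\,\|f\|_{\mathcal{D}\odot\mathcal{D}},
\]
which is exactly (\ref{a}). Theorem \ref{baylage} then yields $P_\mathcal{D}\bar\mu \in \mathcal{X}$, and since $\mu$ is real, $\bar\mu = \mu$, so
\[
P_\mathcal{D}\bar\mu(w) = \int_\mathbb{D} \log\frac{1}{1-w\bar z}\,d\mu(z).
\]

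Next I would unpack real and imaginary parts on the principal branch. Since $\log\frac{1}{1-w\bar z} = -\log|1-w\bar z| - i\arg(1-w\bar z)$, Fubini gives $\operatorname{Im} P_\mathcal{D}\bar\mu(w) = -\int_\mathbb{D} \arg(1-w\bar z)\,d\mu(z)$. Using $\operatorname{Re}(iF) = -\operatorname{Im}(F)$ for any holomorphic $F$, together with the fact that the constant $\pi\mu(\mathbb{D})$ is a real number, I can write
\[
B_\mu(w) = \pi\mu(\mathbb{D}) + \int_\mathbb{D}\arg(1-w\bar z)\,d\mu(z) = \operatorname{Re}\bigl(iP_\mathcal{D}\bar\mu(w) + \pi\mu(\mathbb{D})\bigr).
\]
Since $\mathcal{X}$ is clearly closed under multiplication by $i$ and addition of real constants (its norm depends only on $|f'|^2$ and $|f(0)|^2$), the function $iP_\mathcal{D}\bar\mu + \pi\mu(\mathbb{D})$ lies in $\mathcal{X}$, giving the conclusion.

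The step that does all the actual work is Theorem \ref{baylage}; no further estimation is needed here. The only conceptual hazard is the bookkeeping around branches and signs: one has to check that $\arg(1-w\bar z)$ stays within $[-\pi,\pi]$ so that the principal branch of the logarithm really produces it as an imaginary part, and that the additive constant $\pi\mu(\mathbb{D})$ absorbs the $\pi$ inside the integrand. Both are straightforward once set up, so I do not anticipate a genuine obstacle.
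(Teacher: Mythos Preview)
Your proof is correct and follows the same approach as the paper: verify (\ref{a}) by bringing the absolute value inside and invoking $CM(\mathcal{D})=CM(\mathcal{D}\odot\mathcal{D})$, apply Theorem~\ref{baylage} to get $P_{\mathcal{D}}\bar\mu\in\mathcal{X}$, and then identify $B_\mu$ as the real part of a scalar multiple of $P_{\mathcal{D}}\bar\mu$ plus a constant. Your version simply makes explicit the use of Proposition~\ref{prop} and the branch bookkeeping that the paper leaves implicit; the sign in front of $i$ differs from the paper's, but your computation is the correct one and in any case the conclusion is unaffected since $\mathcal{X}$ is invariant under multiplication by unimodular constants.
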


This corollary is the analog of the fact that the bayalage of a Carleson
measure for the Hardy space, a function obtained from the measure by
integrating against a certain positive kernel, is the real part of a function
in $BMO,$ \cite[Ch 4 Th 1.6]{G}.

\subsection{Interpolating Sequences}

Let $Z=\left\{  z_{i}\right\}  $ be a sequence of points in the open disk. The
associated measure $\mu_{Z}$ is defined by
\[
\mu_{Z}=\sum_{j=1}^{\infty}\frac{1}{L(z_{j})}\delta_{z_{j}}\text{.}%
\]
Let $R$ be the restriction map which takes a holomorphic function $f$ to its
sequence of values on $Z$, $\left\{  f(z_{i})\right\}  .$ The sequence $Z$ is
said to be an interpolating sequence for $\mathcal{D}$ if $R$ is a bounded map
of $\mathcal{D}$ into and onto $l^{2}(\mu_{Z})\ $and an interpolating sequence
for $\mathcal{D}\odot\mathcal{D}$ if $R$ maps $\mathcal{D}\odot\mathcal{D}$
boundedly into and onto $l^{1}(\mu_{Z}).$ It is automatic that $R$ maps
$\mathcal{M}(\mathcal{D})$ into $l^{\infty}(\mu_{Z});$ if the map is
surjective we say $Z$ is an interpolating sequence for $\mathcal{M}%
(\mathcal{D}).$ Walking through the definitions shows that if $R$ is bounded
on either $\mathcal{D}$ or $\mathcal{D}\odot\mathcal{D}$ then we must have
that
\begin{equation}
\mu_{Z}\text{ is a Carleson measure.} \label{CM}%
\end{equation}
(We noted earlier that $CM(\mathcal{D}\odot\mathcal{D)}=CM(\mathcal{D})$ so it
is not necessary to specify further.) Also, in order for interpolation to be
possible the points of $Z$ must maintain an appropriate distance from each
other. We will say $Z$ is separated if there is a $C>0$ so that for all
$i,j,i\neq j$%
\begin{equation}
\beta(0,z_{i})\leq C\beta(z_{i},z_{j}). \label{Sep}%
\end{equation}

\begin{theorem}
The following are equivalent for a sequence Z:

\begin{enumerate}
\item Z satisfies (\ref{CM}) and (\ref{Sep}),

\item Z is an interpolating sequence for $\mathcal{D},$

\item Z is an interpolating sequence for $\mathcal{M}(\mathcal{D}),.and$

\item Z is an interpolating sequence for $\mathcal{D}\odot\mathcal{D}.$
\end{enumerate}

\begin{proof}
The equivalence of the first three statements was shown in manuscripts
circulated by Marshall-Sundberg \cite{MS} and by Bishop \cite{Bi}. The first
published proof is due to B\"{o}e \cite{Bo}. Our contribution is the
equivalence of the last statement.

To show this we first note that if (\ref{CM}) holds then $R$ maps
$\mathcal{D}\odot\mathcal{D}$ boundedly into $l^{1}(\mu_{Z}).$ To see that the
map is onto, suppose $\alpha=\left\{  \alpha_{i}\right\}  $ $\in$ $l^{1}%
(\mu_{Z})$ and we wish to find $f$ $\in\mathcal{D}\odot\mathcal{D}$ with
$f(z_{i})=\alpha_{i},$ $i=1,2,...$ Consider sequences $\beta$ and $\gamma$
defined, for $i=1,2,...$ by
\[
\beta_{i}=\left\vert \alpha_{i}\right\vert ^{1/2},\text{ }\gamma
_{i}=\left\vert \alpha_{i}\right\vert ^{1/2}\frac{\alpha_{i}}{\left\vert
\alpha_{i}\right\vert }%
\]
and note that $\beta$ and $\gamma$ are in $l^{2}(\mu_{Z}).$ If (\ref{CM}) and
(\ref{Sep}) are satisfied then by the second statement Z is an interpolating
sequence for $\mathcal{D}$. Hence we can find $b$ and $g$ in $\mathcal{D}$ so
that for all $i,$ $b(z_{i})=\beta_{i}$ $,g(z_{i})=\gamma_{i}.$ The function
$f=bg$ is the function in $\mathcal{D}\odot\mathcal{D}$ that we require.

In the other direction, suppose Z is an interpolating sequence for
$\mathcal{D}\odot\mathcal{D}.$ We have noted that if $R$ is into then
(\ref{CM}) holds. To finish we show that having $R$ be onto with the natural
norm estimates forces Z to satisfy (\ref{Sep}). Pick $x,y\in Z,x\neq y.$
Suppose $\beta(0,x)\leq\beta(0,y)$ Because $Z$ is an interpolating sequence we
can find $f\in\mathcal{D}\odot\mathcal{D}$ with $f(x)=0,f(y)=L(y)$ and
$\left\Vert f\right\Vert _{\mathcal{D}\odot\mathcal{D}}=O(1).$ We now use
(\ref{xdiff}).%
\begin{align*}
\beta(0,y)  &  \sim L(y)=|f(x)-f(y)|\\
&  \lesssim\left\Vert f\right\Vert _{\mathcal{D}\odot\mathcal{D}}\left\Vert
k_{x}-k_{y}\right\Vert _{\mathcal{X}}\\
&  \lesssim\left\Vert f\right\Vert _{\mathcal{D}\odot\mathcal{D}}%
\beta(x,y)^{1/2}\left(  L(x)^{1/2}+L(y)^{1/2}\right) \\
&  \lesssim\beta(x,y)^{1/2}L(y)^{1/2}\\
&  \lesssim\beta(x,y)^{1/2}\beta(0,y)^{1/2}.
\end{align*}
Hence $\beta(0,y)\leq C\beta(x,y)$, as required.
\end{proof}
\end{theorem}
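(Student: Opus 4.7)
The plan is to take for granted the equivalence of (1), (2), (3) (attributed to Marshall--Sundberg, Bishop, and B\"oe) and reduce the problem to showing (4) is equivalent to the others. It suffices to prove (1)+(2) $\Rightarrow$ (4) and (4) $\Rightarrow$ (1).

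For (1)+(2) $\Rightarrow$ (4), boundedness of $R\colon \mathcal{D}\odot\mathcal{D}\to\ell^1(\mu_Z)$ is essentially automatic: from the pointwise estimate $|f(z)|\lesssim L(z)\|f\|_{\mathcal{D}\odot\mathcal{D}}$ (which is the upper estimate in \eqref{xval}, dual to the Carleson property for $\mu_Z$), summing against $\mu_Z$ and using (CM) yields $\sum |f(z_i)|/L(z_i)\lesssim \|f\|_{\mathcal{D}\odot\mathcal{D}}$. For surjectivity, given $\alpha\in\ell^1(\mu_Z)$, split as in the proof of Theorem~\ref{baylage}: write $\alpha_i = \beta_i\gamma_i$ with $\beta_i=|\alpha_i|^{1/2}$ and $\gamma_i=|\alpha_i|^{1/2}(\alpha_i/|\alpha_i|)$, so that $\beta,\gamma\in\ell^2(\mu_Z)$ with norms controlled by $\|\alpha\|_{\ell^1(\mu_Z)}^{1/2}$. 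Using (2), interpolate $\beta$ and $\gamma$ by functions $b,g\in\mathcal{D}$ with good norm control, and set $f=bg\in\mathcal{D}\odot\mathcal{D}$; then $f(z_i)=\alpha_i$ with $\|f\|_{\mathcal{D}\odot\mathcal{D}}\le\|b\|_{\mathcal{D}}\|g\|_{\mathcal{D}}\lesssim\|\alpha\|_{\ell^1(\mu_Z)}$.

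For (4) $\Rightarrow$ (1), the Carleson condition (CM) follows routinely from the boundedness half of interpolation (applied, say, to squares of reproducing kernels, or by the duality in Theorem~\ref{duality}). The heart of the matter is deriving separation. Given distinct $x,y\in Z$, relabel so $\beta(0,x)\le\beta(0,y)$. Invoke surjectivity of $R$ on the target $\alpha$ with $\alpha_x=0$, $\alpha_y=L(y)$, and all other entries $0$; since $\|\alpha\|_{\ell^1(\mu_Z)}=L(y)/L(y)=1$, we obtain $f\in\mathcal{D}\odot\mathcal{D}$ with $f(x)=0$, $f(y)=L(y)$, and $\|f\|_{\mathcal{D}\odot\mathcal{D}}=O(1)$. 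Then pair via duality against $k_x-k_y\in\mathcal{X}$ and apply the norm estimate \eqref{xdiff}:
\[
L(y)=|f(x)-f(y)|=|\langle f,k_x-k_y\rangle|\lesssim \|k_x-k_y\|_{\mathcal{X}}\lesssim \beta(x,y)^{1/2}\bigl(L(x)^{1/2}+L(y)^{1/2}\bigr).
\]
Since $L(x)\le L(y)$ and $L(y)\sim \beta(0,y)$, this collapses to $\beta(0,y)\lesssim \beta(x,y)^{1/2}\beta(0,y)^{1/2}$, which gives (Sep).

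I expect the main (mild) obstacle to be the separation step: one must choose the interpolation target so that the $\ell^1(\mu_Z)$ norm is $O(1)$ while the discrepancy $|f(x)-f(y)|$ is as large as $L(y)$, and then marry this with the sharp form of \eqref{xdiff} provided by Theorem~2. The factorization trick for the $\ell^1$-to-$\ell^2$ reduction is the main ingredient on the other direction, and it is essentially the $\mathcal{D}\odot\mathcal{D}$ analog of the standard Hardy-space device $\ell^1=\ell^2\cdot\ell^2$.
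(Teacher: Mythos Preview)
Your proof is correct and follows the paper's approach essentially line by line: the $\ell^1=\ell^2\cdot\ell^2$ factorization for surjectivity in $(1)+(2)\Rightarrow(4)$, and the duality pairing with $k_x-k_y$ together with \eqref{xdiff} for separation in $(4)\Rightarrow(1)$, are exactly what the paper does. One small clean-up: for boundedness of $R$ the pointwise estimate $|f(z)|\lesssim L(z)\|f\|_{\mathcal{D}\odot\mathcal{D}}$ is not what carries the argument (summing it gives nothing); what you actually need is $\mu_Z\in CM(\mathcal{D})=CM(\mathcal{D}\odot\mathcal{D})$ (Proposition~\ref{prop}), which immediately yields $\sum|f(z_i)|/L(z_i)\lesssim\|f\|_{\mathcal{D}\odot\mathcal{D}}$.
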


\section{Hankel Type Matrices and Schatten Classes\label{matrices}}

\subsection{The Hardy Space}

A Hankel form on the Hardy space is a bilinear form generated by a holomorphic
symbol function $b$ through the formula%
\[
H_{b}^{\text{Hardy}}\left(  f,g\right)  =\left\langle fg,b\right\rangle
_{\text{Hardy}}.
\]
If $b(z)=\sum_{0}^{\infty}b(n)z^{n}$ then the matrix representation of the
form with respect to the standard orthonormal basis of monomials is $\left(
\overline{b(i+j)}\right)  _{i,j=0}^{\infty}.$ Matrices of this form, the
$\left(  i,j\right)  $ entry is a function of $i+j,$ are called Hankel
matrices. Straightforward functional analytic considerations show that the
space of $b$ for which the form is bounded is exactly the space $\left(
H^{2}\odot H^{2}\right)  ^{\ast}.$ (Such an argument is given in detail in,
for instance, \cite{ARSW}). When this is combined with Fefferman's
identification of $\left(  H^{1}\right)  ^{\ast}$ with $BMO$ we obtain the
first statement of the next theorem. That statement is an endpoint of a scale
of statements relating the size of the $H_{b}^{\text{Hardy}}$ to the
smoothness of the function $b;$ for $0<p<\infty$ the form $H_{b}%
^{\text{Hardy}}$ is in the Schatten class $\mathcal{S}_{p}$ if and only if $b$
is in the diagonal Besov space $B_{p}.$ These ideas are presented
systematically in \cite{P} and \cite{N}, here we just recall a few specifics.
The class $\mathcal{S}_{2}$ is the Hilbert-Schmidt class; it consists of
bilinear forms with the property that their matrix entries with respect to
some, and hence every, orthonormal basis are square summable. The class
$\mathcal{S}_{1}$ is the trace class; it consists of bilinear forms $K$ which
can be written as $K=\sum\alpha_{i}R_{i}$ where the $R_{i}$ are bilinear forms
of norm one and rank one and the sequence of scalars $\left\{  \alpha
_{i}\right\}  $ is absolutely summable. The Besov space $B_{2}$ coincides with
the Dirichlet space $\mathcal{D}$. The Besov space $B_{1}$ is defined by
condition (\ref{besov}) below. We have:

\begin{theorem}
\label{Hardy}

\begin{enumerate}
\item $H_{b}^{\text{Hardy}}$ is bounded if and only if $b\in BMO.$

\item $H_{b}^{\text{Hardy}}$ is in the Hilbert-Schmidt class if and only if
$b\in B_{2},$ i.e. if and only if
\[
\int_{\mathbb{D}}\left\vert b^{\prime}(z)\right\vert ^{2}dxdy<\infty.
\]

\item $H_{b}^{\text{Hardy}}$ is in the trace class if and only if $b\in
B_{1},$ i.e. if and only if
\begin{equation}
\int_{\mathbb{D}}\left\vert b^{\prime\prime}(z)\right\vert dxdy<\infty.
\label{besov}%
\end{equation}

\end{enumerate}
\end{theorem}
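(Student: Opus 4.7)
The plan is to prove (1), (2), (3) in sequence, with each step requiring progressively more refined tools.

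For (1), I would follow the template already used in the Dirichlet case. A standard functional-analytic argument (of the type spelled out in \cite{ARSW}) shows that the space of symbols $b$ for which $H_b^{\text{Hardy}}$ is bounded is precisely the dual of $H^2\odot H^2$ under the Hardy pairing. Since the inner-outer factorization of $H^1$-functions yields $H^2\odot H^2 = H^1$, as recalled in Section 2, this dual is $(H^1)^*$, which Fefferman's duality theorem identifies with $BMO$.

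For (2), a direct matrix computation suffices. In the orthonormal basis $\{z^n\}_{n\geq 0}$ of $H^2$, the entries of $H_b^{\text{Hardy}}$ are $H_b^{\text{Hardy}}(z^i,z^j)=\overline{b(i+j)}$, so
\[
\|H_b^{\text{Hardy}}\|_{\mathcal{S}_2}^2 = \sum_{i,j\geq 0}|b(i+j)|^2 = \sum_{n\geq 0}(n+1)|b(n)|^2,
\]
which is comparable to $|b(0)|^2+\int_{\mathbb{D}}|b'(z)|^2\,dA$. Finiteness of this quantity is exactly the Dirichlet, and hence $B_2$, condition.

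For (3), the main obstacle, I would attack both directions by atomic methods. For the forward direction, start from the reproducing-type formula
\[
b(z) = c_0 + c_1 z + c_2\int_{\mathbb{D}}\frac{(1-|w|^2)\,b''(w)}{(1-\bar w z)^2}\,dA(w)
\]
and write $H_b^{\text{Hardy}}$ as a norm-convergent vector-valued integral of the Hankel forms with symbols $K_w(z)=(1-\bar w z)^{-2}$. Each such form is readily checked to be rank one with trace norm of order $(1-|w|^2)^{-1}$, so the total trace norm is dominated by $\int_{\mathbb{D}}|b''(w)|\,dA(w) \sim \|b\|_{B_1}$. For the converse, I would test $H_b^{\text{Hardy}}$ against rank-one operators built from normalized Cauchy kernels at points of $\mathbb{D}$; tracing $H_b^{\text{Hardy}}$ against such test operators extracts smoothed values of $b''$, and the $\mathcal{S}_1$ hypothesis, combined with uniform operator-norm bounds on the test family (in turn following from (1)), forces $\int|b''|\,dA<\infty$. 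The technical heart, and the place I expect the real difficulty, is organizing the test operators so that the extracted information assembles into a genuine $L^1$ bound rather than merely pointwise control of $b''$; this is precisely Peller's theorem, whose detailed execution I would follow from the monographs \cite{P} and \cite{N} cited in the excerpt.
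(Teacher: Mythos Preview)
Your proposal is correct and, where the paper says anything at all, matches it. Note, however, that the paper does not actually prove Theorem~\ref{Hardy}: it is presented as background, with the remark ``These ideas are presented systematically in \cite{P} and \cite{N}, here we just recall a few specifics.'' The only argument the paper sketches is for part~(1), and that sketch---bounded symbols are $(H^2\odot H^2)^*$, then use $H^2\odot H^2=H^1$ and Fefferman duality---is exactly yours. For~(2) and~(3) the paper gives no argument; your Hilbert--Schmidt computation is the standard one, and your outline for~(3) via rank-one Cauchy-kernel atoms is the classical Peller argument you correctly attribute to \cite{P} and \cite{N}. One cosmetic point: in your discussion of~(3) you have the labels ``forward'' and ``converse'' swapped relative to the stated biconditional, but the mathematics is in order.
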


More generally matrices of the form%
\begin{equation}
A=\left(  a_{ij}\right)  =\left(  \left(  i+1\right)  ^{\alpha}(j+1)^{\beta
}\left(  i+j+1\right)  ^{\gamma}\overline{b(i+j)}\right)  ; \label{A}%
\end{equation}
with
\begin{equation}
\mathit{\min}\left\{  \alpha,\beta\right\}  >\max\left\{  \frac{-1}{2}%
,\frac{-1}{p}\right\}  \label{condition}%
\end{equation}
correspond to forms in $\mathcal{S}_{p}$ if and only if $b(z)$ has a certain
fractional order derivative in $B_{p};$ however that fails if (\ref{condition}%
) fails \cite[Ch 6 Thm 8.9]{P}.

\subsection{The Dirichlet Space}

By a Hankel form on the Dirichlet space we mean a form generated by a
holomorphic symbol function $b$ through the formula
\[
H_{b}^{\text{Dirichlet}}\left(  f,g\right)  =\left\langle fg,b\right\rangle
_{\text{Dirichlet}},
\]
or, more compactly, $H_{b}\left(  f,g\right)  =\left\langle fg,b\right\rangle
. $

It is convenient to restrict $H_{b}$ to the subspace of $\mathcal{D}$ of
functions which vanish at the origin and we do that for the rest of the
section. With that restriction there is no loss in assuming $b(0)=0;$ thus
$b(z)=\sum_{1}^{\infty}b_{n}z^{n}.$ The matrix representation of $H_{b}$ with
respect to the orthonormal basis of monomials $\left\{  n^{-1/2}z^{n}\right\}
_{1}^{\infty}$ is
\begin{equation}
B=\left(  \beta_{ij}\right)  =\left(  \frac{i+j+1}{\sqrt{i+1}\sqrt{j+1}%
}\overline{\hat{b}(i+j)}\right)  _{i,j=1}^{=}. \label{form}%
\end{equation}
This corresponds to $\alpha=\beta=-1/2$ in (\ref{A}), outside the range
(\ref{condition}). The form considered in \cite{CM}, \cite{To} and \cite{RW}
corresponds to $\alpha=-1/2$, $\beta=1/2$, also outside that range.

The boundedness criteria for the forms (\ref{form}) is known and, as expected,
the Hilbert-Schmidt criterion is straightforward to obtain. One approach to
the proof of the third statement in Theorem \ref{Hardy} is through the use of
decomposition theorems. When that approach is used to study trace class
membership for $H_{b}^{\text{Hardy}}$ one can obtain a necessary condition for
membership and a sufficient condition, and the two conditions obtained are the
same. However using a similar approach to study $H_{b}^{\text{Dirichlet}}$
produces two different conditions. We record those results in the following
theorem. They, together with their straightforward consequences by
interpolation, are the state of our knowledge.

\begin{theorem}
\label{sch}

\begin{enumerate}
\item $H_{b}$ is bounded if and only if $b\in\mathcal{X}.$

\item $H_{b}$ is in the Hilbert-Schmidt class if and only if
\[
\sum_{1}^{\infty}n\log n\left\vert b_{n}\right\vert ^{2}<\infty,
\]
equivalently if and only if%
\begin{equation}
\int_{\mathbb{D}}\left\vert b^{\prime}(z)\right\vert ^{2}\log\left(  \frac
{1}{1-\left\vert z\right\vert ^{2}}\right)  dxdy<\infty. \label{int}%
\end{equation}

\item \qquad

\begin{enumerate}
\item If $H_{b}$ is in the trace class then $b\in B_{1},$ i.e.
\begin{equation}
\int_{\mathbb{D}}\left\vert b^{\prime\prime}(z)\right\vert dxdy<\infty.
\label{trace}%
\end{equation}

\item If
\begin{equation}
\int_{\mathbb{D}}\left\vert b^{\prime\prime}(z)\right\vert \sqrt{\log\left(
\frac{1}{1-\left\vert z\right\vert ^{2}}\right)  }dxdy<\infty. \label{log}%
\end{equation}
then $H_{b}$ is in the trace class.

\item Neither of the two previous implications can be reversed.

\item In fact there is no function $\rho(r)$ increasing continuously to
$\infty$ on $\left(  0,1\right)  $ with the property that knowing $H_{b}$ is
in the trace class insures
\[
\int_{\mathbb{D}}\left\vert b^{\prime\prime}(z)\right\vert \rho(\left\vert
z\right\vert )dxdy<\infty.
\]

\end{enumerate}
\end{enumerate}
\end{theorem}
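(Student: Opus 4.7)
My plan is a contradiction argument via the closed graph theorem that reduces (3d) to a single norm estimate on the reproducing kernel. Suppose, for some continuous increasing $\rho:(0,1)\to(0,\infty)$ with $\rho(r)\to\infty$ as $r\to 1^-$, that every $b$ with $H_b\in\mathcal{S}_1$ also has $\int_{\mathbb{D}}|b''|\rho\,dA<\infty$. The set $\{b\in\mathcal{X}:H_b\in\mathcal{S}_1\}$ normed by $\|H_b\|_{\mathcal{S}_1}$ is a Banach space (it is isometric to a closed subspace of $\mathcal{S}_1$), and the linear map $b\mapsto b''$ into $L^1(\mathbb{D},\rho(|z|)\,dA)$ is, by our hypothesis, everywhere defined. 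It is closed: $\mathcal{S}_1$-convergence of $H_{b_n}$ forces operator norm convergence, hence $\|\cdot\|_{\mathcal{X}}$-convergence of the symbols by (1), hence pointwise convergence of $b_n$ and therefore of $b_n''$; a parallel $L^1(\rho\,dA)$-convergence has an a.e.-convergent subsequence, and the two limits coincide. The closed graph theorem then yields
\[
\int_{\mathbb{D}}|b''|\rho\,dA\ \le\ C\,\|H_b\|_{\mathcal{S}_1}\qquad\text{for every }b\text{ with }H_b\in\mathcal{S}_1.
\]

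I would test this on the atoms $\tilde k_\zeta(z)=-\log(1-\bar\zeta z)$, via the key lemma $\|H_{\tilde k_\zeta}\|_{\mathcal{S}_1}\sim L(\zeta)$. The lower bound is immediate from (2): the Taylor coefficients of $\tilde k_\zeta$ are $\bar\zeta^n/n$, so $\|H_{\tilde k_\zeta}\|_{\mathcal{S}_2}^2\sim\sum_n(\log n)|\zeta|^{2n}/n\sim L(\zeta)^{2}$, and $\mathcal{S}_1\hookrightarrow\mathcal{S}_2$. The upper bound is the delicate point, since invoking (\ref{log}) would yield only $L(\zeta)^{3/2}$, one factor of $L(\zeta)^{1/2}$ too weak for the contradiction to close. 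Instead I would read directly from (\ref{form}) that
\[
\beta_{ij}\ =\ \frac{\zeta^{i+j}}{\sqrt{(i+1)(j+1)}}\ +\ \frac{\zeta^{i+j}}{(i+j)\sqrt{(i+1)(j+1)}},
\]
recognize the first summand as the rank-one operator $vv^T$ with $v_i=\zeta^i/\sqrt{i+1}$ and $\|v\|^2=\sum_{i\ge 1}|\zeta|^{2i}/(i+1)\sim L(\zeta)$, and observe that the correction matrix is positive semidefinite (writing $1/(i+j)=\int_0^1 t^{i+j-1}dt$ presents it as an integral of rank-one PSD operators) with trace $\sum_{i\ge 1}|\zeta|^{2i}/(2i(i+1))\le 1/2$. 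This gives $\|H_{\tilde k_\zeta}\|_{\mathcal{S}_1}\le L(\zeta)+O(1)$.

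Combining the closed-graph bound with the lemma gives $\int|\tilde k_\zeta''|\rho\,dA\lesssim L(\zeta)$. On the other hand, using the Poisson-type identity $\int_0^{2\pi}d\theta/|1-\bar\zeta re^{i\theta}|^{2}=2\pi/(1-|\zeta|^{2}r^{2})$ and the elementary inequality $1-|\zeta|^{2}r^{2}\le 2(1-r^{2})$ for $r\le|\zeta|$, I compute
\[
\int_{\mathbb{D}}|\tilde k_\zeta''|\rho\,dA\ =\ 2\pi|\zeta|^{2}\int_{0}^{1}\frac{r\,\rho(r)}{1-|\zeta|^{2}r^{2}}\,dr\ \ge\ \pi|\zeta|^2\int_{0}^{|\zeta|}\frac{r\,\rho(r)}{1-r^{2}}\,dr,
\]
and the substitution $u=-\log(1-r^{2})$ turns the right-hand side into $\tfrac{\pi}{2}|\zeta|^{2}\int_{0}^{L(\zeta)-1}\rho(r(u))\,du$. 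Since $r(u)\to 1$ and $\rho(r(u))\to\infty$ as $u\to\infty$, the Cesaro lemma gives $L(\zeta)^{-1}\int_{0}^{L(\zeta)}\rho(r(u))\,du\to\infty$ as $|\zeta|\to 1^-$, contradicting the bound $\le CL(\zeta)$.

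The principal obstacle is the upper half of the key lemma, $\|H_{\tilde k_\zeta}\|_{\mathcal{S}_1}\lesssim L(\zeta)$: the generic sufficient condition (\ref{log}) costs exactly the $L(\zeta)^{1/2}$ needed to make the contradiction go through, so one must exploit the explicit rank-one-plus-PSD-correction decomposition of the Hankel matrix of the specific symbol $\tilde k_\zeta$. Once that is in place the closed-graph step is routine, and the Poisson identity reduces the rest of the argument on $\mathbb{D}$ to a one-dimensional Cesaro averaging on the radial interval $(0,1)$, which works for an arbitrary continuous $\rho$ tending to $\infty$.
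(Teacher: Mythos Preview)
Your proposal treats only part (3d); taking (1) and (2) as input (as the paper does), your argument for (3d) is correct, but it follows a genuinely different route from the paper's.

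The paper proves (3d) by a direct construction attributed to Bonami--Bruna: for any prescribed $\rho$, take a lacunary symbol $b(z)=\sum c_k z^{3^k}$ with $\sum 3^k|c_k|<\infty$; since $\sum_{i+j=n}(i+1)^{-1/2}(j+1)^{-1/2}\sim\pi$, the matrix entries of $H_b$ are absolutely summable and hence $H_b\in\mathcal{S}_1$, while the $c_k$ can be chosen so that $\int|b''|\rho\,dA=\infty$. This is concrete and uses nothing beyond (\ref{form}). Your approach instead promotes the hypothetical implication to a uniform norm inequality via the closed graph theorem and then falsifies it on a one-parameter family. The crucial point you single out---that invoking (\ref{log}) on $\tilde k_\zeta$ loses exactly $L(\zeta)^{1/2}$, so one must bypass (3b)---is exactly right, and your fix is clean: for real $\zeta\in(0,1)$ the matrix (\ref{form}) for the symbol $\tilde k_\zeta$ is positive semidefinite (the splitting $\beta_{ij}=v_iv_j+$ a PSD correction with bounded trace, using $1/(i+j)=\int_0^1 t^{i+j-1}\,dt$), so $\|H_{\tilde k_\zeta}\|_{\mathcal{S}_1}=\operatorname{tr} H_{\tilde k_\zeta}\sim L(\zeta)$. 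Note that the PSD claim needs $\zeta$ real; you should say so explicitly, and it suffices since the Ces\`aro step only uses radial $\zeta$. What your route buys is an exact $\mathcal{S}_1$ estimate $\|H_{\tilde k_\zeta}\|_{\mathcal{S}_1}\sim L(\zeta)$ that parallels the paper's estimates (\ref{xval}) and (\ref{tc}); what the paper's route buys is a self-contained construction that avoids the closed graph theorem and produces an explicit counterexample for each $\rho$.
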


\begin{proof}
The first statement is in \cite{ARSW}.

For the second recall that $H_{b}$ is in $\mathcal{S}_{2}$ if and only if it
has square summable matrix entries. We start with the matrix (\ref{form}) and
compute%
\begin{align*}
\sum_{i,j}\left\vert \frac{i+j+1}{\sqrt{i+1}\sqrt{j+1}}b_{i+j}\right\vert
^{2}  &  =\sum_{k}\left(  \sum_{m=0}^{k}\frac{\left(  k+1\right)  ^{2}%
}{\left(  m+1\right)  \left(  k-m-1\right)  }\right)  \left\vert
b_{k}\right\vert ^{2}\\
&  =\sum_{k}\frac{\left(  k+1\right)  ^{2}}{k}\left(  \sum_{m=0}^{k}\frac
{1}{m+1}+\frac{1}{k-m-1}\right)  \left\vert b_{k}\right\vert ^{2}\\
&  =\sum_{k}\left(  2k\log k\right)  \left(  1+O(1)\right)  \left\vert
b_{k}\right\vert ^{2}%
\end{align*}
which is equivalent to the desired condition. The integral condition follows
from this together with the facts that the integral in (\ref{int}) equals%
\[
\sum n^{2}\left\vert b_{n}\right\vert ^{2}\int_{0}^{1}r^{2n-1}\log\left(
\frac{1}{1-r^{2}}\right)  dr
\]
and
\[
\int_{0}^{1}r^{2n-1}\log\left(  \frac{1}{1-r^{2}}\right)  dr\sim\frac{\log
n}{n}.
\]

The arguments for the first two parts of (3) are adaptations of arguments used
to obtain analogous statements for Hankel forms on the Hardy and Bergman
spaces; we only present the broad strokes here. One can find such arguments
presented in full in \cite{R} or \cite{Z}.

Pick a small parameter $\varepsilon>0.$ Pick a set of points $\Omega=\left\{
\omega_{i}\right\}  \subset\mathbb{D}$ that are hyperbolically separated;
$\beta\left(  \omega_{i},\omega_{j}\right)  >\varepsilon$ if $i\neq j$; and
also so that $\Omega$ is relatively thick,%
\[
\forall z\in\mathbb{D}\text{ }\inf\left\{  \beta(z,\omega_{i}):\omega_{i}%
\in\Omega\right\}  <100\varepsilon.
\]
If $\varepsilon$ is sufficiently small then the set of functions
\[
\left\{  h_{i}\right\}  =\left\{  \frac{\delta\left(  \omega_{i}\right)
}{1-\bar{\omega}_{i}z}:\omega_{i}\in\Omega\right\}  \subset\mathcal{D}%
\]
is the image of an orthonormal basis of a Hilbert space under a bounded linear
map. This insures that if $K$ is a trace class bilinear form then $\sum
_{i}\left\vert K\left(  h_{i},h_{i}\right)  \right\vert <\infty.$ Hence if
$H_{b}$ is trace class then
\[
\sum\left\vert \left\langle \frac{\delta\left(  \omega_{i}\right)  ^{2}%
}{\left(  1-\bar{\omega}_{i}z\right)  ^{2}},b\right\rangle \right\vert
<\infty.
\]
It is essentially true that $\left\langle \left(  1-\bar{\omega}_{i}z\right)
^{-2},b\right\rangle =b^{\prime\prime}(\omega_{i}).$ Hence
\[
\sum\left\vert b^{\prime\prime}(\omega_{i})\right\vert \delta\left(
\omega_{i}\right)  ^{2}<\infty.
\]
This sum is approximately a Riemann sum for the integral (\ref{trace}). Using
basic modulus of continuity estimates and the flexibility available in the
construction of $\Omega$ we can construct a similar sum which is a majorant
for the integral. That establishes (a).

To see that this implication cannot be reversed we consider the functions
$\bar{\partial}_{\zeta}k_{\zeta}\left(  \cdot\right)  $ and the associated
Hankel forms $H_{\zeta}$ given by $H_{\zeta}\left(  f,g\right)  =\left\langle
fg,\bar{\partial}_{\zeta}k_{\zeta}\right\rangle .$ We have
\[
H_{\zeta}\left(  f,g\right)  =\left(  fg\right)  ^{\prime}\left(
\zeta\right)  =f^{\prime}\left(  \zeta\right)  g\left(  \zeta\right)
+g^{\prime}\left(  \zeta\right)  f\left(  \zeta\right)  .
\]
Thus the $H_{\zeta}$ are all rank two forms and hence their trace class norms
are uniformly comparable to their operator norms. By the first part of this
theorem and the estimate (\ref{xderiv}) we conclude that%
\begin{equation}
\left\Vert H_{\zeta}\right\Vert _{\text{trace class}}\sim L(\zeta)^{1/2}%
\delta(\zeta)^{-1}. \label{tc}%
\end{equation}
On the other hand standard estimates show $\left\Vert \bar{\partial}_{\zeta
}k_{\zeta}\right\Vert _{B_{1}}\sim\delta(\zeta)^{-1}.$ Thus
\[
\left\Vert H_{\zeta}\right\Vert _{\text{trace class}}\nsim\left\Vert
\bar{\partial}_{\zeta}k_{\zeta}\right\Vert _{B_{1}}.
\]

We now go to the third part. Set $\tilde{H}_{\zeta}=L(\zeta)^{-1/2}%
\delta(\zeta)H_{\zeta}.$ By the discussion in the preceding paragraph these
are forms with uniformly bounded trace class norms. Hence any linear
combination of them with absolutely summable coefficients is also in
$\mathcal{S}_{1}.$ In particular, if we select a set $\Omega$ meeting the
conditions stated earlier and let $\{\alpha(\omega_{i})\}$ be a function
defined on $\Omega$ with the property that
\[
\sum\left\vert \alpha(\omega_{i})\right\vert L\left(  \omega_{i}\right)
^{1/2}<\infty.
\]
then the form $K,$
\begin{align*}
K  &  =\sum\alpha(\omega_{i})\delta(\omega_{i})H_{\omega_{i}}\\
&  =\sum\alpha(\omega_{i})L\left(  \omega_{i}\right)  ^{1/2}\tilde{H}%
_{\omega_{i}},
\end{align*}
is in $\mathcal{S}_{1}$ with norm dominated by $\sum\left\vert \alpha
(\omega_{i})\right\vert L\left(  \omega_{i}\right)  ^{1/2}.$

The symbol function $b$ of $K$ is
\[
b(z)=\sum\alpha(\omega_{i})\delta(\omega_{i})\left.  \!\bar{\partial}_{\zeta
}k_{\zeta}\right\vert _{\zeta=\omega_{i}}.
\]
To finish this part we invoke the following decomposition result which is a
straightforward variation of the results of \cite[Sect 4.5]{Z}.

\begin{lemma}
Suppose $\varepsilon$ is sufficiently small. Given a function $b$ which
satisfies (\ref{log}) one can find scalars $\left\{  \alpha(\omega
_{i})\right\}  $ so that
\begin{align*}
b(z)  &  =\sum\alpha(\omega_{i})\delta(\omega_{i})\left.  \!\bar{\partial
}_{\zeta}k_{\zeta}\right\vert _{\zeta=\omega_{i}}(z)\text{ and}\\
\sum\left\vert \alpha(\omega_{i})\right\vert L\left(  \omega_{i}\right)
^{1/2}  &  \lesssim\int_{\mathbb{D}}\left\vert b^{\prime\prime}(z)\right\vert
\sqrt{\log\left(  \frac{1}{1-\left\vert z\right\vert ^{2}}\right)  }dxdy.
\end{align*}

\end{lemma}

The fact that the implication in part (c) cannot be reversed is a special case
of statement (d). That statement is based on an observation of Bonami and
Bruna \cite[Thm 8]{BB}. Suppose $b(z)$ is given by the lacunary series
$b(z)=\sum c_{k}z^{3^{k}}.$ If $\sum3^{k}\left\vert c_{k}\right\vert
\,<\infty$ then the matrix entries of $H_{b}$ are absolutely summable and
hence $H_{b}$ is in the trace class. On the other hand given $\rho$ as
described it is straightforward to select the $\left\{  c_{k}\right\}  $ so
that the summability condition is met but $\int\left\vert b^{\prime\prime
}\right\vert \rho$\thinspace$dxdy=\infty.$\textbf{\ }
\end{proof}

\section{Questions}

Very little is known about the spaces $\mathcal{X}$ and $\mathcal{D}%
\odot\mathcal{D}$ or the functions in them. Here we mention some questions
that seem natural.

We lack a satisfactory intrinsic characterization of the functions in these
spaces. We noted that (\ref{bmo}) is equivalent to the traditional definition
of $BMO$. That definition is based on characterization of $BMO$ functions
using certain measures of oscillation. It would be interesting to have a
characterization of $\mathcal{X}$ based on local oscillation. It would also be
interesting to have a representation of functions in $\mathcal{D}%
\odot\mathcal{D}$ in terms of simple building blocks, analogous to the atomic
decomposition of $H^{1}.$

There are real and complex interpolation scales with the spaces $\mathcal{X}$
and $\mathcal{D}\odot\mathcal{D}$ as endpoints. The duality statements in
Theorem 1 and basic facts from interpolation theory insure that the midpoint
of those scales will be $\mathcal{D}$. However beyond that very special case
the authors do not even have attractive conjectures for the description of the
interpolation spaces.

For which $b$ is $H_{b}\in\mathcal{S}_{p}$? Answers to similar questions have
often involved Besov spaces. However the results in the previous theorem,
particularly for $p<2,$ suggest that may not be the case here.

\section{Acknowledgement}

This work was begun while the second, third, and fourth author were visiting
the Fields Institute. They thank the institute for its hospitality and its
excellent working conditions.

\end{document}